\documentclass[11pt]{article}
\usepackage[margin=1in]{geometry}
\usepackage{amsmath,amssymb,amsthm,mathtools}
\usepackage[T1]{fontenc}
\usepackage{lmodern}
\usepackage{microtype}
\usepackage{hyperref}

\newcommand\ZZ{\mathbb Z}

\newcommand\CC{\mathbb C}

\theoremstyle{plain}
\newtheorem{theorem}{Theorem}
\newtheorem{lemma}{Lemma}
\newtheorem{proposition}{Proposition}
\newtheorem{corollary}{Corollary}

\theoremstyle{definition}

\newcommand{\Ga}{\Gamma}
\newcommand{\Hyp}{\,{}_2F_1}

\begin{document}
\title{Proof of Sun's conjectures on hyperbolic cosine series via the Eisenstein--Lambert method}
\author{Nikita Kalinin, Guangdong Technion – Israel Institute of Technology, China}

\maketitle
%
%
%




\begin{abstract}
We prove two conjectures of Zhi-Wei Sun concerning hyperbolic cosine Lambert series. The first one is the evaluation, for integers \(m\geq 0\), of
\[
S_m=\sum_{n=1}^\infty\left(
\frac{n^{2m}}{\cosh(\pi n)-1}
-\frac{(2^{2m+1}-(-1)^{m(m+1)/2}2^{m+1}+4)n^{2m}}{\cosh(2\pi n)-1}
+\frac{2^{2m+2}n^{2m}}{\cosh(4\pi n)-1}
\right).
\]
We prove that
\[
S_0=\frac1{12},\qquad
S_1=\frac1{2\pi^2},\qquad
S_m=0\quad (m>1).
\]
The second one is the quadratic identity
\[
\sum_{n=1}^\infty \left(
\frac{4}{(\cosh(\pi n)-1)^2}
-\frac{55}{(\cosh(2\pi n)-1)^2}
+\frac{16}{(\cosh(4\pi n)-1)^2}
\right)
=
\frac{77-234/\pi}{72}.
\]
The proof uses an elementary level-four identity for Lambert series and its consequences for Eisenstein series. After differentiating this identity and evaluating it at \(i/2\), the first conjecture follows from the modular transformation law for \(E_{2m}\), with the cases \(m=0\) and \(m=1\) treated separately by the quasimodular transformation law for \(E_2\). The second conjecture follows by rewriting the corresponding squared-kernel
series as
\((E_4+10E_2-11)/360\)
and evaluating only the resulting linear combinations of \(E_2\) and \(E_4\) at \(i/2\), \(i\), and \(2i\).

2020 MSC Primary: 11F11, 11F27; Secondary: 33E05.

keywords: Sun's conjectures, hyperbolic cosine series, Lambert series, Eisenstein series, modular forms, quasimodular forms.

\end{abstract}


\section{Introduction}

The starting point of this paper is a small cluster of recent experimental identities \cite{Pov} involving the hyperbolic cosine function at the special arguments \(\pi n\), \(2\pi n\), and \(4\pi n\). The first of these is an identity  for Catalan's constant,
\[
G=\sum_{k=0}^{\infty}\frac{(-1)^k}{(2k+1)^2},
\]
namely
\[
\sum_{n=1}^{\infty}\frac1{n^2}\left(
\frac{11}{\cosh(\pi n)-1}
+\frac{11}{\cosh(4\pi n)-1}
-\frac{71}{2(\cosh(2\pi n)-1)}
\right)=G.
\]

Using the elementary identity
\[
\cosh x-1=2\sinh^2(x/2),
\]
these series can be rewritten as Lambert series, which places them naturally in the setting of Eisenstein series.

A second discussion \cite{SunMO508945} led to a companion identity of a more classical type,
\[
\sum_{n=1}^{\infty}\frac1{n^2}\left(
\frac{20}{\cosh(\pi n)-1}
-\frac{70}{\cosh(2\pi n)-1}
+\frac{20}{\cosh(4\pi n)-1}
\right)=\frac{\pi^2}{6},
\]
and suggested a broader family of identities in which analogous linear combinations are expected to produce rational multiples of \(\zeta(2m)\). In particular, the coefficients occurring in the term with \(\cosh(2\pi n)-1\) appear to follow a striking arithmetic pattern. This suggested that the phenomenon is not accidental but is governed by the level-four geometry of the CM points \(i/2\), \(i\), and \(2i\).

Motivated by the two previous observations, Zhi-Wei Sun formulated \cite{SunMO508960} a conjectural family in which nonnegative even powers \(n^{2m}\), rather than reciprocal powers, appear in the numerator.  In the same post Sun also proposed a second conjecture involving the squared denominators \((\cosh(a\pi n)-1)^2\).  Our main result proves this conjectural evaluation-and-vanishing phenomenon in full generality.

\begin{theorem}\label{thm:main}
For \(m\in\mathbb Z_{\ge 0}\), define
\[
S_m:=\sum_{n=1}^\infty\left(
\frac{n^{2m}}{\cosh(\pi n)-1}
-\frac{c_m n^{2m}}{\cosh(2\pi n)-1}
+\frac{2^{2m+2}n^{2m}}{\cosh(4\pi n)-1}
\right),
\]
where
\[
c_m=2^{2m+1}-(-1)^{m(m+1)/2}2^{m+1}+4.
\]
Then
\[
S_0=\frac1{12},\qquad
S_1=\frac1{2\pi^2},\qquad
S_m=0 \quad \text{for all } m>1.
\]
\end{theorem}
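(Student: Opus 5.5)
Rewrite each kernel as a Lambert-type series. Put \(q=e^{-\pi a n}\) style variables: since \(\cosh x-1=2\sinh^2(x/2)\), we have
\[
\frac{1}{\cosh(a\pi n)-1}=\frac{2e^{-a\pi n}}{(1-e^{-a\pi n})^2}=2\sum_{d\ge1} d\,e^{-a\pi n d}.
\]
Summing against \(n^{2m}\) gives \(\sum_n \frac{n^{2m}}{\cosh(a\pi n)-1}=2\sum_{N\ge1}\bigl(\sum_{nd=N} n^{2m}d\bigr)e^{-a\pi N}\). Here \(\sum_{nd=N}n^{2m}d=N\sigma_{2m-1}(N)\) for \(m\ge1\), and \(\sigma_1(N)\) for \(m=0\). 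This is the derivative \(q\,\frac{d}{dq}\) of \(\sum\sigma_{2m-1}(N)q^N\), i.e. a multiple of \(D E_{2m}\) with \(D=\frac{1}{2\pi i}\frac{d}{d\tau}\), evaluated at \(q=e^{2\pi i\tau}\) with \(\tau=ai/2\). So \(S_m\) is a combination of \(DE_{2m}\) at \(\tau=i/2,i,2i\) with weights \(1,-c_m,2^{2m+2}\). For \(m=0\) it is a combination of \(E_2\) at these points, up to constants.

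For \(m\ge2\) the plan is as follows. Let \(f=E_{2m}\). Differentiating \(f(-1/\tau)=\tau^{2m}f(\tau)\) gives
\[
\tau^{-2}f'(-1/\tau)=2m\tau^{2m-1}f(\tau)+\tau^{2m}f'(\tau).
\]
Applying this at \(\tau=i/2\) relates \(f'(2i)\) to \(f'(i/2)\) and \(f(i/2)\). Applying it at \(\tau=i\) forces \(f(i)=0\) when \(m\) is odd, and expresses \(f'(i)\) via \(f(i)\). I still need a relation between the values at \(i/2\), \(i\), \(2i\), and between their derivatives, that does not come from \(S\). This is the promised level-four Lambert identity. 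I would derive it from \(2\tau\mapsto\) duplication: \(E_{2m}(\tau)-2^{2m}E_{2m}(2\tau)\) and \(E_{2m}(\tau/2)\) are linked through the odd-part series \(\sum\sigma^{odd}\), which is invariant under \(\tau\mapsto\tau+1\) in a twisted way.

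Concretely, I would evaluate at \(\tau=i/2\) and \(\tau=(i+1)/2\)-type points. The factor \((-1)^{m(m+1)/2}2^{m+1}\) in \(c_m\) looks like \((1+i)^{2m}\)-type contributions, since \((1+i)^{2m}=2^m i^m\). So the hidden relation is likely the level-two transformation \(\tau\mapsto \tau/(\tau+1)\), which sends \(i\) to \((1+i)/2\). Near there, \(q\) at \((1+i)/2\) equals \(-e^{-\pi}\), so \(E(( 1+i)/2)\) is expressed through \(E(i/2)\) and \(E(2i)\) by splitting \(n\) into even and odd parts.

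Combining these linear relations should make the weights \(1,-c_m,2^{2m+2}\) kill both the \(f\)-terms and the \(f'\)-terms. This verification, tracking \(\tau^{2m}\) factors like \((i/2)^{2m}\) and \((1+i)^{2m}\), is where the sign \((-1)^{m(m+1)/2}\) must emerge. It is the main obstacle, and I would check it first against \(m=2\).

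For \(m=0,1\) the same computation runs with \(E_2\). There the transformation law has the extra \(\frac{6\tau}{\pi i}\) term, and the derivative term comes from \(DE_2=(E_2^2-E_4)/12\) (Ramanujan). The anomalous terms survive, giving \(1/(2\pi^2)\) for \(m=1\). For \(m=0\) they combine with the constant term of the \(E_2\) normalization to give \(1/12\). For \(m=1\) I would use the known values \(E_2(i)=3/\pi\) and \(E_4\)/\(E_2^2\) relations at \(i/2\) and \(2i\), derived from the same level-two relation.
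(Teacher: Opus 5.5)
Your outline follows the paper's architecture: reduce \(S_m\) to \(E_{2m}'\) at \(i/2,i,2i\), split \(n\) by parity under \(\tau\mapsto\tau+\tfrac12\), evaluate at \(\tau=i/2\), and carry \((1+i)/2\) back to \(i\) by an element of \(SL_2(\mathbb Z)\). However, it stops exactly at the step that carries the theorem, the one you call the main obstacle. Beyond the reduction, nothing is yet proved, and two things are missing.

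First, you never state the level-four relation, and your description of it is inaccurate. Parity splitting expresses \(E_{2m}(\tfrac{1+i}{2})\) through \(E_{2m}(i/2)\), \(E_{2m}(i)\) and \(E_{2m}(2i)\), not through \(E_{2m}(i/2)\) and \(E_{2m}(2i)\) alone. The identity is \(E_{2m}(\tau)+E_{2m}(\tau+\tfrac12)-(2^{2m}+2)E_{2m}(2\tau)+2^{2m}E_{2m}(4\tau)=0\), and its \(E_{2m}(i)\)-term is what produces the \(2^{2m+1}+4\) in \(c_m\). Differentiating at \(\tau=i/2\) gives
\[
E_{2m}'(i/2)+E_{2m}'\bigl(\tfrac{1+i}{2}\bigr)-(2^{2m+1}+4)E_{2m}'(i)+2^{2m+2}E_{2m}'(2i)=0.
\]
So for \(m\ge2\) the theorem is equivalent to \(E_{2m}'(\tfrac{1+i}{2})=\varepsilon_m2^{m+1}E_{2m}'(i)\), where \(\varepsilon_m=(-1)^{m(m+1)/2}\).

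Second, this identity must actually be proved, and with your own map it is short. Differentiating \(f(\tfrac{\tau}{\tau+1})=(\tau+1)^{2m}f(\tau)\) at \(\tau=i\) gives \(f'(\tfrac{1+i}{2})=2m(1+i)^{2m+1}f(i)+(1+i)^{2m+2}f'(i)\). For \(m\) odd, \(f(i)=0\) turns this into \((2i)^{m+1}f'(i)\). For \(m\) even, \(f'(i)=mi\,f(i)\) turns it into \(2(2i)^mf'(i)\). Both equal \(\varepsilon_m2^{m+1}f'(i)\). The relation between \(f'(i/2)\) and \(f'(2i)\) that you set up first is not needed.

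For \(m=1\), the route via \(DE_2=(E_2^2-E_4)/12\) does not close with the inputs you name. It needs the nonlinear relation \(\bigl(E_2(2i)-\tfrac{3}{2\pi}\bigr)^2=\tfrac{3}{16}E_4(i)\). The transformation laws and the level-four identity, applied to values, give only \(E_2(i)=3/\pi\), \(E_2(i/2)+4E_2(2i)=12/\pi\), \(E_4(i/2)=16E_4(2i)\) and \(16E_4(2i)=11E_4(i)\). For \(E_2\), the level-four identity at \(i/2\) merely reproduces the second of these, since \(E_2(\tfrac{1+i}{2})=6/\pi\). You would need extra input, such as theta constants or \((2E_2(2\tau)-E_2(\tau))^2=\tfrac15E_4(\tau)+\tfrac45E_4(2\tau)\).

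The paper instead runs the derivative argument for \(E_2\) itself. The differentiated quasimodular law gives \(E_2'(\tfrac{1+i}{2})=\tfrac{12i}{\pi}-4E_2'(i)\). The differentiated level-four identity then yields \(E_2'(i/2)-16E_2'(i)+16E_2'(2i)=-\tfrac{12i}{\pi}\), in which the unknown \(E_2'(i)\) cancels. Hence \(S_1=-\tfrac{1}{24\pi i}\cdot\bigl(-\tfrac{12i}{\pi}\bigr)=\tfrac{1}{2\pi^2}\).

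Your \(m=0\) case is fine as sketched: \(12S_0=1-E_2(i/2)+4E_2(i)-4E_2(2i)=1\).
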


The proof is based on a level-four identity for the Lambert series
\[
\mathfrak L_k(\tau):=
\sum_{n=1}^{\infty}
\frac{n^{k-1}}{e^{-2\pi i n\tau}-1}.
\]
A direct separation of even and odd summation indices gives
\[
\mathfrak L_k(\tau)
+
\mathfrak L_k\!\left(\tau+\frac12\right)
-
(2^k+2)\mathfrak L_k(2\tau)
+
2^k\mathfrak L_k(4\tau)
=
0.
\]
For positive even weights this gives the corresponding identity for the
Eisenstein series \(E_{2j}\). After differentiating at \(\tau=i/2\), the
general case \(m>1\) follows from the modular transformation law for
\(E_{2m}\) and its consequence at the elliptic point \(i\). The two exceptional
cases \(m=0\) and \(m=1\) are treated separately using the elementary reduction
to \(E_2\) and the quasimodular transformation formula for \(E_2\).

Classical evaluations of reciprocal hyperbolic series go back at least to
Zucker \cite{Zucker1979,Zucker1984}. In the present paper the required
identities are obtained from Eisenstein--Lambert identities rather than from
individual evaluations of the Eisenstein series involved.

We shall also prove the following quadratic identity, which is the second
conjecture of Zhi-Wei Sun considered here.

\begin{theorem}\label{thm:squared}
One has
\[
\sum_{n=1}^\infty \left(
\frac{4}{(\cosh(\pi n)-1)^2}
-\frac{55}{(\cosh(2\pi n)-1)^2}
+\frac{16}{(\cosh(4\pi n)-1)^2}
\right)
=
\frac{77-234/\pi}{72}.
\]
\end{theorem}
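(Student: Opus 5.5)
The plan is to turn the squared-kernel series into a Lambert series of weights $2$ and $4$, identify it with a combination of $E_2$ and $E_4$, and then evaluate only the particular linear combination the theorem needs at the three CM points.

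\textbf{Step 1: Lambert expansion.} Use $\cosh x-1=2\sinh^2(x/2)$. With $q=e^{-x}$,
\[
\frac{1}{(\cosh x-1)^2}=\frac{4q^2}{(1-q)^4}=4\sum_{k\ge1}\binom{k+1}{3}q^k=\frac23\sum_{k\ge1}(k^3-k)q^k .
\]
Put $x=a\pi n$ and sum over $n\ge1$. Collecting terms by $N=kn$ with $Q=e^{-a\pi}$ gives
\[
\sum_{n\ge1}\frac{1}{(\cosh(a\pi n)-1)^2}=\frac23\sum_{N\ge1}\bigl(\sigma_3(N)-\sigma_1(N)\bigr)Q^N .
\]
Now insert $E_4=1+240\sum\sigma_3(N)q^N$ and $E_2=1-24\sum\sigma_1(N)q^N$. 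This gives
\[
F(\tau):=\frac{E_4(\tau)+10E_2(\tau)-11}{360},
\]
evaluated at the point with $e^{2\pi i\tau}=e^{-a\pi}$, i.e.\ $\tau=ai/2$. So $a=1,2,4$ correspond to $\tau=i/2,\,i,\,2i$.

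The left-hand side of the theorem is therefore $4F(i/2)-55F(i)+16F(2i)$. Its constant part is $-11(4-55+16)/360=77/72$, which already matches. It remains to show two things:
\[
4E_4(i/2)-55E_4(i)+16E_4(2i)=0,\qquad 10\bigl(4E_2(i/2)-55E_2(i)+16E_2(2i)\bigr)=-\frac{1170}{\pi}.
\]

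\textbf{Step 2: the $E_2$ part.} Apply the quasimodular law $E_2(-1/\tau)=\tau^2E_2(\tau)+\frac{6\tau}{\pi i}$.
\begin{itemize}
\item At $\tau=i$ this gives $E_2(i)=3/\pi$.
\item At $\tau=2i$ it gives $E_2(i/2)=-4E_2(2i)+12/\pi$.
\end{itemize}
Substituting, the unknown $E_2(2i)$ cancels: $4E_2(i/2)+16E_2(2i)=48/\pi$. Hence the bracket equals $48/\pi-165/\pi=-117/\pi$, and ten times it is $-1170/\pi$, as required.

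\textbf{Step 3: the $E_4$ part (main obstacle).} Modularity $E_4(-1/\tau)=\tau^4E_4(\tau)$ at $\tau=2i$ gives $E_4(i/2)=16E_4(2i)$. The $E_4$ combination then reduces to $80E_4(2i)-55E_4(i)$. So the whole theorem comes down to the genuine level-two relation
\[
E_4(2i)=\tfrac{11}{16}E_4(i).
\]
This relation must hold, since the right-hand side of the theorem contains no $\Gamma(1/4)$-type period. I do not expect it to follow from $\mathrm{SL}_2(\mathbb Z)$ modularity alone.

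I would derive it from the theta representation $E_4=\tfrac12(\theta_2^8+\theta_3^8+\theta_4^8)$ together with two inputs:
\begin{itemize}
\item the values at $\tau=i$, namely $\theta_2(i)=\theta_4(i)$ and $\theta_3(i)^4=2\theta_4(i)^4$;
\item the duplication formulas $\theta_3(2\tau)^2=\tfrac12(\theta_3^2+\theta_4^2)$, $\theta_4(2\tau)^2=\theta_3\theta_4$, and $\theta_2(2\tau)^2=\tfrac12(\theta_3^2-\theta_4^2)$.
\end{itemize}
Writing everything in terms of $t=\theta_4(i)^2$, the identity becomes a routine algebraic check.

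As an alternative, I could use the level-four Lambert identity for $E_4$ from the introduction, evaluated at suitable points, together with $E_4(i/2)=16E_4(2i)$. I would try this if it yields the ratio more directly. Once the relation is in hand, the $E_4$ contributions vanish and the theorem follows.
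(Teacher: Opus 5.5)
Your proposal is correct and follows the paper's proof: the same Lambert reduction to $(E_4+10E_2-11)/360$, the same quasimodular evaluation of the $E_2$-part, and the same reduction of the $E_4$-part to $16E_4(2i)=11E_4(i)$. Your theta-constant derivation of that relation checks out ($E_4(i)=3t^4$, $E_4(2i)=\tfrac{33}{16}t^4$) and is the route of the paper's longer arXiv version, while your alternative is the paper's main-text argument, which evaluates the level-four identity at $\tau=i/2$ and uses $E_4\bigl(\tfrac{1+i}{2}\bigr)=(i-1)^4E_4(i)=-4E_4(i)$.
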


The proof of Theorem~\ref{thm:squared} uses the elementary identity
\[
\sum_{n=1}^{\infty}
\frac{1}{\bigl(\cosh(2\pi i n\tau)-1\bigr)^2}
=
\frac{E_4(\tau)+10E_2(\tau)-11}{360}.
\]
Thus the desired series is reduced to one linear combination of \(E_4\) and
one linear combination of \(E_2\) at the three points \(i/2\), \(i\), and
\(2i\). The \(E_4\)-combination vanishes by the same level-four Eisenstein
identity, while the \(E_2\)-combination is evaluated by the quasimodular
transformation formula for \(E_2\). For related background on Ramanujan-type elliptic-function evaluations
and theta-function methods, see \cite{Berndt2016,Cooper2017}.

The paper is organized as follows. In Section~\ref{sec:def} we recall the
needed facts about Lambert series, Eisenstein series, and the transformation
formula for \(E_2\), and we prove the level-four Lambert identity. In
Section~\ref{sec:ini} we prove Theorem~\ref{thm:squared} and the exceptional
cases \(m=0,1\) of Theorem~\ref{thm:main}. In Section~\ref{sec:last} we prove
the vanishing \(S_m=0\) for all \(m>1\). In Section~\ref{sec:more} we record a general reduction for higher reciprocal kernels, a cubic companion identity, and a short alternative verification of the
low-weight cases \(m=0,1,2,3\).  The case \(m=0\) follows from
Xu and Zhao's identity~\cite[(101)]{XuZhao-2022}, while the cases
\(m=1,2,3\) follow from their explicit formulas in Corollary~2, together
with the quadratic transformation~\cite[(30)]{XuZhao-2022}.

Old (with longer proofs) version of the text is kept here as the arxiv version, see Section~\ref{sec:defo}.

\section{Definitions and standard identities}\label{sec:def}

\subsection{Eisenstein and Lambert series}

Let
\[
\mathfrak H:=\{\tau\in\mathbb C:\operatorname{Im}\tau>0\},
\qquad
q=e^{2\pi i\tau}.
\]
For \(r\geq 0\), write
\[
\sigma_r(n):=\sum_{d\mid n}d^r.
\]
We write \(B_n\) for the Bernoulli numbers, defined by
\[
\frac{t}{e^t-1}=\sum_{n=0}^{\infty}B_n\frac{t^n}{n!}.
\]

We use the Eisenstein series in Ramanujan's normalization
\cite{Rama1916}; see also Berndt's account of Ramanujan's notation
and identities~\cite[pp.~87--142]{B1991}:
\[
E_2(\tau)=1-24\sum_{n=1}^{\infty}\sigma_1(n)q^n,
\]
and, for \(j\geq 2\),
\[
E_{2j}(\tau)
=
1-\frac{4j}{B_{2j}}\sum_{n=1}^{\infty}\sigma_{2j-1}(n)q^n.
\]
The same formula also gives \(E_2\) when \(j=1\), since \(B_2=1/6\).

For \(j\geq 2\), we shall use the modular transformation laws
\[
E_{2j}(\tau+1)=E_{2j}(\tau),
\qquad
E_{2j}\!\left(-\frac1\tau\right)=\tau^{2j}E_{2j}(\tau).
\]
For \(E_2\), we use the quasimodular transformation law
\[
E_2\!\left(-\frac1\tau\right)
=
\tau^2E_2(\tau)+\frac{6\tau}{\pi i}.
\]
Differentiating the modular transformation formula gives, for every even
\(k\geq 4\),
\[
E_k'\!\left(-\frac1\tau\right)
=
k\tau^{k+1}E_k(\tau)+\tau^{k+2}E_k'(\tau),
\]
where the prime denotes differentiation with respect to \(\tau\).

We shall use the Lambert series
\[
\mathfrak L_k(\tau):=
\sum_{n=1}^{\infty}
\frac{n^{k-1}}{e^{-2\pi i n\tau}-1}
=
\sum_{n=1}^{\infty}
n^{k-1}\frac{q^n}{1-q^n}.
\]
For the values of \(k\) used below, these series converge absolutely and
locally uniformly on \(\mathfrak H\). Moreover, for \(j\geq 1\),
\[
E_{2j}(\tau)
=
1-\frac{4j}{B_{2j}}\mathfrak L_{2j}(\tau).
\]

\subsection{A level-four Lambert identity}

\begin{lemma}\label{lem:Lambert-level-four}
Let \(k\in\mathbb Z\). Then
\[
\mathfrak L_k(\tau)
+
\mathfrak L_k\!\left(\tau+\frac12\right)
-
(2^k+2)\mathfrak L_k(2\tau)
+
2^k\mathfrak L_k(4\tau)
=
0.
\]
\end{lemma}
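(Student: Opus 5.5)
We will prove the identity by working with the \(q\)-expansion \(\mathfrak L_k(\tau)=\sum_{N\ge1}a_k(N)q^N\), where \(a_k(N)=\sum_{n\mid N}n^{k-1}\), and showing that the four terms cancel coefficient by coefficient. All series converge absolutely for \(|q|<1\), since \(\sum_n n^{k-1}|q|^n<\infty\) for every integer \(k\), so rearranging terms is legitimate. Under \(\tau\mapsto\tau+\tfrac12\) we have \(q\mapsto -q\). Hence
\[
\mathfrak L_k(\tau)+\mathfrak L_k\!\left(\tau+\tfrac12\right)=\sum_{N\ge1}\bigl(1+(-1)^N\bigr)a_k(N)q^N=2\sum_{M\ge1}a_k(2M)q^{2M}.
\]
For the other two terms, \(\mathfrak L_k(2\tau)=\sum_M a_k(M)q^{2M}\) and \(\mathfrak L_k(4\tau)=\sum_{M}a_k(M)q^{4M}\).

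Comparing coefficients of \(q^{2M}\), the lemma reduces to the divisor-sum identity
\[
2a_k(2M)-(2^k+2)a_k(M)+2^k\,a_k(M/2)=0,
\]
where we set \(a_k(M/2)=0\) when \(M\) is odd. Odd powers of \(q\) do not occur in any of the four terms, so nothing needs to be checked there.

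To prove this identity, we split the divisors of \(2M\) into odd and even ones. The odd divisors of \(2M\) are exactly the odd divisors of \(M\), and the even divisors are \(2d\) with \(d\mid M\). This gives
\[
a_k(2M)=a_k^{\mathrm{odd}}(M)+2^{k-1}a_k(M),
\]
where \(a_k^{\mathrm{odd}}(M)=\sum_{d\mid M,\ d\text{ odd}}d^{k-1}\). By the same splitting applied to \(M\) itself,
\[
a_k(M)=a_k^{\mathrm{odd}}(M)+2^{k-1}a_k(M/2),
\]
with the second term absent when \(M\) is odd. Substituting both,
\[
2a_k(2M)=2a_k^{\mathrm{odd}}(M)+2^k a_k(M)=2a_k(M)-2^k a_k(M/2)+2^k a_k(M),
\]
which is precisely the required relation.

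The only delicate point is the bookkeeping for odd \(M\), where the \(\mathfrak L_k(4\tau)\) term contributes nothing. This is handled by the convention \(a_k(M/2)=0\), and the splitting formulas remain valid in that case, so no separate argument is needed.
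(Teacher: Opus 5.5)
Your proof is correct and is essentially the paper's argument written out coefficient by coefficient. The paper splits the Lambert summation index \(n\) by parity, using \(\frac{q^n}{1-q^n}-\frac{q^n}{1+q^n}=\frac{2q^{2n}}{1-q^{2n}}\) for odd \(n\); your even-divisor and odd-divisor contributions \(2^k a_k(M)\) and \(2a_k^{\mathrm{odd}}(M)=2a_k(M)-2^k a_k(M/2)\) are exactly the \(q^{2M}\)-coefficients of the paper's two pieces \(2^k\mathfrak L_k(2\tau)\) and \(2\mathfrak L_k(2\tau)-2^k\mathfrak L_k(4\tau)\).
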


\begin{proof}
Put \(q=e^{2\pi i\tau}\). Since \(|q|<1\), the following rearrangements are
justified by absolute convergence. We have
\[
\mathfrak L_k(\tau)
=
\sum_{n=1}^{\infty}
n^{k-1}\frac{q^n}{1-q^n},
\]
and
\[
\mathfrak L_k\!\left(\tau+\frac12\right)
=
\sum_{n=1}^{\infty}
n^{k-1}\frac{(-q)^n}{1-(-q)^n}.
\]
Therefore
\[
\mathfrak L_k(\tau)+\mathfrak L_k\!\left(\tau+\frac12\right)
=
\sum_{n=1}^{\infty}
n^{k-1}
\left(
\frac{q^n}{1-q^n}
+
\frac{(-q)^n}{1-(-q)^n}
\right).
\]
If \(n\) is even, the expression in parentheses is
\[
\frac{2q^n}{1-q^n}.
\]
If \(n\) is odd, it is
\[
\frac{q^n}{1-q^n}-\frac{q^n}{1+q^n}
=
\frac{2q^{2n}}{1-q^{2n}}.
\]
Thus
\[
\mathfrak L_k(\tau)+\mathfrak L_k\!\left(\tau+\frac12\right)
=
2\sum_{\substack{n\geq 1\\ n\ \mathrm{even}}}
n^{k-1}\frac{q^n}{1-q^n}
+
2\sum_{\substack{n\geq 1\\ n\ \mathrm{odd}}}
n^{k-1}\frac{q^{2n}}{1-q^{2n}}.
\]
The even part is
\[
2\sum_{r=1}^{\infty}
(2r)^{k-1}\frac{q^{2r}}{1-q^{2r}}
=
2^k\mathfrak L_k(2\tau).
\]
The odd part is
\[
2\sum_{n=1}^{\infty}
n^{k-1}\frac{q^{2n}}{1-q^{2n}}
-
2\sum_{r=1}^{\infty}
(2r)^{k-1}\frac{q^{4r}}{1-q^{4r}},
\]
and therefore equals
\[
2\mathfrak L_k(2\tau)-2^k\mathfrak L_k(4\tau).
\]
Consequently
\[
\mathfrak L_k(\tau)+\mathfrak L_k\!\left(\tau+\frac12\right)
=
(2^k+2)\mathfrak L_k(2\tau)-2^k\mathfrak L_k(4\tau),
\]
which is equivalent to the claimed identity.
\end{proof}

\begin{corollary}\label{cor:Eisenstein-level-four}
For every \(j\geq 1\),
\[
E_{2j}(\tau)
+
E_{2j}\!\left(\tau+\frac12\right)
-
(2^{2j}+2)E_{2j}(2\tau)
+
2^{2j}E_{2j}(4\tau)
=
0.
\]
\end{corollary}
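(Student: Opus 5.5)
The plan is to substitute the relation \(E_{2j}=1-\frac{4j}{B_{2j}}\mathfrak L_{2j}\) into each of the four terms. After that substitution, the corollary becomes Lemma~\ref{lem:Lambert-level-four} with \(k=2j\), up to a bookkeeping check on the constant terms.

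First I would write \(c_j:=\frac{4j}{B_{2j}}\), which is finite and nonzero for every \(j\ge 1\) because \(B_{2j}\neq 0\). I would insert
\[
E_{2j}(\tau)=1-c_j\mathfrak L_{2j}(\tau)
\]
for each argument \(\tau\), \(\tau+\tfrac12\), \(2\tau\), \(4\tau\). All four points lie in \(\mathfrak H\), so the Lambert series converge absolutely there. This also holds at \(\tau+\tfrac12\), since \(e^{2\pi i(\tau+1/2)}=-q\) still has modulus less than \(1\).

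Next I would collect the constant terms. They contribute
\[
1+1-(2^{2j}+2)+2^{2j}=0.
\]
The remaining terms contribute
\[
-c_j\Bigl(\mathfrak L_{2j}(\tau)+\mathfrak L_{2j}(\tau+\tfrac12)-(2^{2j}+2)\mathfrak L_{2j}(2\tau)+2^{2j}\mathfrak L_{2j}(4\tau)\Bigr).
\]
The bracket vanishes by Lemma~\ref{lem:Lambert-level-four} with \(k=2j\), and this proves the identity.

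There is no real obstacle here. The only points that need care are that the coefficient pattern of the corollary matches the lemma's coefficients with \(2^k=2^{2j}\), which is exactly what makes the constant terms cancel, and that the case \(j=1\) is covered. The latter holds because the paper noted that the same normalization \(E_2=1-24\mathfrak L_2\) applies, since \(B_2=1/6\). Quasimodularity of \(E_2\) plays no role, because the argument uses only the \(q\)-expansions.
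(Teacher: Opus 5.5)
Your proof is correct and matches the paper's argument: substitute \(E_{2j}=1-\frac{4j}{B_{2j}}\mathfrak L_{2j}\) at the four points, note that the constants cancel because \(1+1-(2^{2j}+2)+2^{2j}=0\), and apply Lemma~\ref{lem:Lambert-level-four} with \(k=2j\). One cosmetic point: the letter \(c_j\) for \(4j/B_{2j}\) clashes with the paper's \(c_m\) in Theorem~\ref{thm:main}, so a different symbol would avoid confusion.
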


\begin{proof}
Using
\[
E_{2j}(\tau)
=
1-\frac{4j}{B_{2j}}\mathfrak L_{2j}(\tau),
\]
the assertion follows immediately from Lemma~\ref{lem:Lambert-level-four}.
The constant terms cancel because
\[
1+1-(2^{2j}+2)+2^{2j}=0.
\]
\end{proof}

\begin{lemma}\label{lem:Lm-Lambert}
Let \(m\in\mathbb Z_{\ge 0}\). For \(a>0\), define
\[
L_m(a):=
\sum_{n=1}^{\infty}
\frac{n^{2m}}{\cosh(a\pi n)-1}.
\]
Then
\[
L_0(a)=\frac{1-E_2(ia/2)}{12},
\]
and, for \(m\geq 1\),
\[
L_m(a)=-\frac{B_{2m}}{4m\pi i}\,E_{2m}'(ia/2).
\]
\end{lemma}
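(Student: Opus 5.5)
The plan is to rewrite each summand as a Lambert-type expression in \(q=e^{2\pi i\tau}\) at \(\tau=ia/2\), and then identify the resulting series with \(E_{2}\) or with \(E_{2m}'\). First, \(\cosh x-1=2\sinh^2(x/2)\) gives
\[
\frac{1}{\cosh(a\pi n)-1}=\frac{2}{(e^{a\pi n/2}-e^{-a\pi n/2})^2}=\frac{2e^{-a\pi n}}{(1-e^{-a\pi n})^2}=\frac{2x^n}{(1-x^n)^2},
\qquad x:=e^{-a\pi}.
\]
Then we expand \(x^n/(1-x^n)^2=\sum_{d\ge1}d\,x^{nd}\). The resulting double series has nonnegative terms and \(x<1\), so the rearrangement below is legitimate.

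For \(m=0\) we obtain
\[
L_0(a)=2\sum_{n,d\ge1}d\,x^{nd}=2\sum_{N\ge1}\sigma_1(N)x^N .
\]
With \(\tau=ia/2\) we have \(q=e^{2\pi i\tau}=e^{-\pi a}=x\). The definition \(E_2=1-24\sum\sigma_1(N)q^N\) then gives \(\sum\sigma_1(N)x^N=(1-E_2(ia/2))/24\), and hence \(L_0(a)=(1-E_2(ia/2))/12\).

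For \(m\ge1\) we instead group by \(N=nd\) with weight \(n^{2m}d\):
\[
L_m(a)=2\sum_{N\ge1}\Bigl(\sum_{n\mid N}n^{2m}\tfrac{N}{n}\Bigr)x^N=2\sum_{N\ge1}N\,\sigma_{2m-1}(N)\,x^N .
\]
It is cleaner to compare this with the derivative of the Eisenstein series. From \(E_{2m}(\tau)=1-\frac{4m}{B_{2m}}\sum\sigma_{2m-1}(N)q^N\) and \(\frac{d}{d\tau}q^N=2\pi i N q^N\), we get
\[
E_{2m}'(\tau)=-\frac{4m}{B_{2m}}\,2\pi i\sum_{N\ge1}N\sigma_{2m-1}(N)q^N .
\]
Evaluating at \(\tau=ia/2\) gives \(\sum N\sigma_{2m-1}(N)x^N=-\frac{B_{2m}}{8m\pi i}E_{2m}'(ia/2)\). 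Multiplying by \(2\) yields \(L_m(a)=-\frac{B_{2m}}{4m\pi i}E_{2m}'(ia/2)\), as claimed.

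There is no real obstacle. The only points needing care are the bookkeeping of which divisor carries the power \(2m\) (the coefficient is \(n^{2m}d=N n^{2m-1}\), which sums to \(N\sigma_{2m-1}(N)\)) and the justification for differentiating the \(q\)-series termwise. The latter follows from local uniform convergence on \(\mathfrak H\), as noted after the definition of \(\mathfrak L_k\). The case \(m=1\) uses \(E_2\) in the same normalization, which is consistent because \(B_2=1/6\) gives \(4/B_2=24\).
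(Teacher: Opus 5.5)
Your proof is correct and is essentially the paper's argument: you write the kernel as $2Q^n/(1-Q^n)^2$ with $Q=e^{-a\pi}=e^{2\pi i\tau}$ at $\tau=ia/2$, then recognize $E_2$ (for $m=0$) or the $\tau$-derivative of $E_{2m}$ (for $m\ge 1$), and your divisor-sum bookkeeping $\sum_{nd=N}n^{2m}d=N\sigma_{2m-1}(N)$ is exactly the route of the longer proof kept in the paper's arXiv section. The main-text proof skips the divisor sums by differentiating the Lambert series $\mathfrak L_{2m}(\tau)=\sum n^{2m-1}q^n/(1-q^n)$ termwise, using $\frac{d}{d\tau}\frac{q^n}{1-q^n}=2\pi i\, n\frac{q^n}{(1-q^n)^2}$; this is only a cosmetic difference.
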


\begin{proof}
Put \(\tau=ia/2\), and write
\[
Q=e^{-a\pi}=e^{2\pi i\tau}.
\]
Since
\[
\cosh(a\pi n)-1
=
\frac{Q^{-n}+Q^n-2}{2}
=
\frac{(1-Q^n)^2}{2Q^n},
\]
we have
\[
\frac{1}{\cosh(a\pi n)-1}
=
\frac{2Q^n}{(1-Q^n)^2}.
\]
For \(m=0\), this gives
\[
L_0(a)=2\sum_{n=1}^{\infty}\frac{Q^n}{(1-Q^n)^2}
=
2\sum_{r=1}^{\infty}\sigma_1(r)Q^r
=
\frac{1-E_2(\tau)}{12}.
\]

For \(m\geq 1\),
\[
L_m(a)
=
2\sum_{n=1}^{\infty}n^{2m}\frac{Q^n}{(1-Q^n)^2}.
\]
The differentiated series is also absolutely and locally uniformly convergent on \(\mathfrak H\), so termwise differentiation is justified, so
\[
\mathfrak L_{2m}'(\tau)
=
2\pi i
\sum_{n=1}^{\infty}n^{2m}\frac{Q^n}{(1-Q^n)^2}.
\]
Hence
\[
L_m(a)=\frac{1}{\pi i}\mathfrak L_{2m}'(\tau).
\]
Since
\[
E_{2m}(\tau)=1-\frac{4m}{B_{2m}}\mathfrak L_{2m}(\tau),
\]
we get
\[
\mathfrak L_{2m}'(\tau)
=
-\frac{B_{2m}}{4m}E_{2m}'(\tau).
\]
Therefore
\[
L_m(a)=-\frac{B_{2m}}{4m\pi i}E_{2m}'(ia/2).
\]
\end{proof}

\section{The quadratic identity and the exceptional cases}\label{sec:ini}

We first prove the identity needed for the reciprocal quadratic kernel.

\begin{lemma}\label{lem:squared-kernel}
For \(\tau\in\mathfrak H\), define
\[
\Phi(\tau):=
\sum_{n=1}^{\infty}
\frac{1}{\bigl(\cosh(2\pi i n\tau)-1\bigr)^2}.
\]
Then
\[
\Phi(\tau)=\frac{E_4(\tau)+10E_2(\tau)-11}{360}.
\]
\end{lemma}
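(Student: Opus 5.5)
Put \(q=e^{2\pi i\tau}\) and write \(x=q^n\). As in the proof of Lemma~\ref{lem:Lm-Lambert}, \(\cosh(2\pi i n\tau)-1=(1-x)^2/(2x)\), so
\[
\frac{1}{(\cosh(2\pi i n\tau)-1)^2}=\frac{4x^2}{(1-x)^4}.
\]
The plan is to expand this rational function into a \(q\)-series, collect the terms into divisor sums, and match the result with the \(q\)-expansions of \(E_2\) and \(E_4\).

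First, I write \(4x^2/(1-x)^4\) as \(\sum_{d\ge1} p(d)x^d\) with a polynomial \(p\). From
\[
\frac{x}{(1-x)^2}=\sum_d d\,x^d
\qquad\text{and}\qquad
\frac{x(1+4x+x^2)}{(1-x)^4}=\sum_d d^3x^d,
\]
we get
\[
\frac{x^2}{(1-x)^4}=\sum_d \binom{d+1}{3}x^d=\sum_d \frac{d^3-d}{6}\,x^d .
\]
Hence \(4x^2/(1-x)^4=\sum_d \tfrac{2}{3}(d^3-d)x^d\). It would be prudent to check this on the first coefficients: for \(d=2\) both sides give \(4\).

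Next, I substitute \(x=q^n\) and sum over \(n\). Since \(|q|<1\), everything converges absolutely, so the double series may be rearranged:
\[
\Phi(\tau)=\frac23\sum_{N\ge1}\bigl(\sigma_3(N)-\sigma_1(N)\bigr)q^N .
\]
Now I compare with the Eisenstein series. We have \(E_4=1+240\sum\sigma_3(N)q^N\), because \(-4j/B_{2j}=-8/(-1/30)=240\) for \(j=2\), and \(E_2=1-24\sum\sigma_1(N)q^N\). Therefore
\[
\sum\sigma_3 q^N=\frac{E_4-1}{240},\qquad \sum\sigma_1q^N=\frac{1-E_2}{24}.
\]
This gives
\[
\Phi=\frac23\left(\frac{E_4-1}{240}+\frac{E_2-1}{24}\right)=\frac{E_4-1+10E_2-10}{360},
\]
which is the claim.

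The only real obstacle is getting the polynomial coefficients \(\tfrac23(d^3-d)\) right, together with the normalisation constants \(240\) and \(24\). The arithmetic \(\tfrac23\cdot\tfrac1{240}=\tfrac1{360}\) and \(\tfrac23\cdot\tfrac1{24}=\tfrac{10}{360}\) confirms the stated form. Convergence is routine because all series are absolutely convergent on \(\mathfrak H\).
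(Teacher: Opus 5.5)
Your proof is correct and follows essentially the paper's route. Like the paper, you rewrite each term as $4q^{2n}/(1-q^n)^4$, expand it as $\frac23\sum_{d}(d^3-d)q^{dn}$, interchange the sums by absolute convergence, and match the result with the $q$-expansions of $E_4$ and $E_2$. The only cosmetic difference is how the coefficient $\binom{d+1}{3}$ is obtained: you subtract the generating functions for $d$ and $d^3$, whereas the paper reads it off the binomial series $(1-y)^{-4}=\sum_{r\ge0}\binom{r+3}{3}y^r$.
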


\begin{proof}
Put \(q=e^{2\pi i\tau}\). Since \(\operatorname{Im}\tau>0\), we have
\(|q|<1\), and all rearrangements below are justified by absolute convergence.
Now
\[
\cosh(2\pi i n\tau)-1
=
\frac{q^n+q^{-n}-2}{2}
=
\frac{(1-q^n)^2}{2q^n}.
\]
Therefore
\[
\frac{1}{\bigl(\cosh(2\pi i n\tau)-1\bigr)^2}
=
\frac{4q^{2n}}{(1-q^n)^4}.
\]
Using
\[
\frac{1}{(1-y)^4}
=
\sum_{r=0}^{\infty}\binom{r+3}{3}y^r,
\]
we get
\[
\frac{4q^{2n}}{(1-q^n)^4}
=
4\sum_{r=0}^{\infty}\binom{r+3}{3}q^{(r+2)n}.
\]
Writing \(\ell=r+2\), this becomes
\[
\frac{4q^{2n}}{(1-q^n)^4}
=
4\sum_{\ell=2}^{\infty}\binom{\ell+1}{3}q^{\ell n}
=
\frac{2}{3}\sum_{\ell=1}^{\infty}(\ell^3-\ell)q^{\ell n}.
\]
Thus
\[
\Phi(\tau)
=
\frac{2}{3}
\sum_{n=1}^{\infty}
\sum_{\ell=1}^{\infty}
(\ell^3-\ell)q^{\ell n}.
\]
Equivalently,
\[
\Phi(\tau)
=
\frac{2}{3}
\sum_{\ell=1}^{\infty}
\frac{\ell^3-\ell}{e^{-2\pi i \ell\tau}-1}.
\]
Since
\[
E_2(\tau)
=
1-24\sum_{\ell=1}^{\infty}
\frac{\ell}{e^{-2\pi i \ell\tau}-1},
\]
and
\[
E_4(\tau)
=
1+240\sum_{\ell=1}^{\infty}
\frac{\ell^3}{e^{-2\pi i \ell\tau}-1},
\]
we obtain
\[
\Phi(\tau)
=
\frac{2}{3}
\left(
\frac{E_4(\tau)-1}{240}
-
\frac{1-E_2(\tau)}{24}
\right)
=
\frac{E_4(\tau)+10E_2(\tau)-11}{360}.
\]
\end{proof}

\begin{proof}[Proof of Theorem~\ref{thm:squared}]
The left-hand side of Theorem~\ref{thm:squared} is
\[
\mathcal S:=4\Phi(i/2)-55\Phi(i)+16\Phi(2i).
\]
By Lemma~\ref{lem:squared-kernel},
\[
360\mathcal S
=
\bigl(4E_4(i/2)-55E_4(i)+16E_4(2i)\bigr)
+
10\bigl(4E_2(i/2)-55E_2(i)+16E_2(2i)\bigr)
+385.
\]

We first evaluate the \(E_4\)-combination. By
Corollary~\ref{cor:Eisenstein-level-four} with \(j=2\),
\[
E_4(\tau)+E_4\!\left(\tau+\frac12\right)-18E_4(2\tau)+16E_4(4\tau)=0.
\]
Putting \(\tau=i/2\), we obtain
\[
E_4(i/2)+E_4\!\left(\frac{1+i}{2}\right)
-18E_4(i)+16E_4(2i)=0.
\]
Using
\[
E_4\!\left(-\frac1\tau\right)=\tau^4E_4(\tau),
\qquad
E_4(\tau+1)=E_4(\tau),
\]
we have
\[
E_4(i/2)
=
E_4\!\left(-\frac1{2i}\right)
=
(2i)^4E_4(2i)
=
16E_4(2i).
\]
Also,
\[
\frac{1+i}{2}=-\frac1{i-1},
\]
and hence
\[
E_4\!\left(\frac{1+i}{2}\right)
=
(i-1)^4E_4(i-1)
=
-4E_4(i).
\]
Substituting these two evaluations gives
\[
16E_4(2i)-4E_4(i)-18E_4(i)+16E_4(2i)=0,
\]
so
\[
16E_4(2i)-11E_4(i)=0.
\]
Therefore
\[
4E_4(i/2)-55E_4(i)+16E_4(2i)
=
80E_4(2i)-55E_4(i)
=
5\bigl(16E_4(2i)-11E_4(i)\bigr)
=
0.
\]

It remains to evaluate the \(E_2\)-combination. The transformation formula
\[
E_2\!\left(-\frac1\tau\right)
=
\tau^2E_2(\tau)+\frac{6\tau}{\pi i}
\]
gives, at \(\tau=i\),
\[
E_2(i)=\frac3\pi.
\]
At \(\tau=2i\), it gives
\[
E_2(i/2)
=
-4E_2(2i)+\frac{12}{\pi},
\]
or equivalently
\[
E_2(i/2)+4E_2(2i)=\frac{12}{\pi}.
\]
Hence
\begin{align*}
10\bigl(4E_2(i/2)-55E_2(i)+16E_2(2i)\bigr)
&=
10\left(4\bigl(E_2(i/2)+4E_2(2i)\bigr)-55E_2(i)\right)\\
&=
10\left(\frac{48}{\pi}-\frac{165}{\pi}\right)\\
&=
-\frac{1170}{\pi}.
\end{align*}
Combining the \(E_4\)- and \(E_2\)-parts, we get
\[
360\mathcal S
=
385-\frac{1170}{\pi}.
\]
Therefore
\[
\mathcal S
=
\frac{385-1170/\pi}{360}
=
\frac{77-234/\pi}{72}.
\]
This is precisely
\[
\sum_{n=1}^{\infty}
\left(
\frac{4}{(\cosh(\pi n)-1)^2}
-\frac{55}{(\cosh(2\pi n)-1)^2}
+\frac{16}{(\cosh(4\pi n)-1)^2}
\right)
=
\frac{77-234/\pi}{72}.
\]
The theorem follows.
\end{proof}

We now prove the exceptional cases \(m=0\) and \(m=1\) of
Theorem~\ref{thm:main}.

\begin{proposition}\label{prop:exceptional-cases}
For the sums \(S_m\) of Theorem~\ref{thm:main}, one has
\[
S_0=\frac1{12},
\qquad
S_1=\frac1{2\pi^2}.
\]
\end{proposition}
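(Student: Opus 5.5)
The plan is to convert each \(S_m\) (\(m=0,1\)) into a combination of values of \(E_2\) or \(E_2'\) at \(i/2,i,2i\) using Lemma~\ref{lem:Lm-Lambert}, and then evaluate that combination with the quasimodular law for \(E_2\), together with Corollary~\ref{cor:Eisenstein-level-four} for \(j=1\) where needed. First note the constants: \(c_0=2-2+4=4\) and \(c_1=8+4+4=16\). Hence
\[
S_0=L_0(1)-4L_0(2)+4L_0(4),\qquad S_1=L_1(1)-16L_1(2)+16L_1(4).
\]

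\emph{Case \(m=0\).} By Lemma~\ref{lem:Lm-Lambert},
\[
12S_0=1-E_2(i/2)+4E_2(i)-4E_2(2i)
=1-\bigl(E_2(i/2)+4E_2(2i)\bigr)+4E_2(i).
\]
The proof of Theorem~\ref{thm:squared} already gives \(E_2(i/2)+4E_2(2i)=12/\pi\) and \(E_2(i)=3/\pi\). Substituting, the two \(\pi^{-1}\) terms cancel and \(S_0=1/12\).

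\emph{Case \(m=1\).} Since \(B_2=1/6\), Lemma~\ref{lem:Lm-Lambert} gives \(L_1(a)=-E_2'(ia/2)/(24\pi i)\). Hence
\[
S_1=-\frac{1}{24\pi i}\bigl(E_2'(i/2)-16E_2'(i)+16E_2'(2i)\bigr).
\]
The key point is to evaluate this combination without knowing \(E_2'(i)\) individually. Differentiating the quasimodular law gives
\[
E_2'\!\left(-\tfrac1\tau\right)=2\tau^3E_2(\tau)+\tau^4E_2'(\tau)+\frac{6\tau^2}{\pi i}.
\]
Next, differentiate Corollary~\ref{cor:Eisenstein-level-four} with \(j=1\), keeping track of the chain-rule factors 2 and 4:
\[
E_2'(\tau)+E_2'\!\left(\tau+\tfrac12\right)-12E_2'(2\tau)+16E_2'(4\tau)=0.
\]
At \(\tau=i/2\) this reads \(E_2'(i/2)+E_2'\bigl(\tfrac{1+i}{2}\bigr)-12E_2'(i)+16E_2'(2i)=0\).

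It remains to compute \(E_2'\bigl(\tfrac{1+i}{2}\bigr)\). Write \(\tfrac{1+i}{2}=-1/(i-1)\), use periodicity (so \(E_2(i-1)=E_2(i)\) and \(E_2'(i-1)=E_2'(i)\)), and note \((i-1)^2=-2i\), \((i-1)^3=2+2i\), \((i-1)^4=-4\). Together with \(E_2(i)=3/\pi\), the differentiated law gives
\[
E_2'\!\left(\tfrac{1+i}{2}\right)=(4+4i)\tfrac{3}{\pi}-4E_2'(i)-\tfrac{12}{\pi}=\tfrac{12i}{\pi}-4E_2'(i).
\]
Substituting into the level-four relation yields exactly
\[
E_2'(i/2)-16E_2'(i)+16E_2'(2i)=-\frac{12i}{\pi},
\]
and therefore \(S_1=\dfrac{12i/\pi}{24\pi i}=\dfrac{1}{2\pi^2}\).

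The main obstacle is the \(m=1\) case. Applying the differentiated transformation at \(\tau=i\) only recovers \(E_2(i)=3/\pi\) and gives no information about \(E_2'(i)\). The trick is therefore to use the differentiated level-four identity at \(\tau=i/2\), so that \(E_2'(i)\) enters with exactly the coefficient \(-16\) required by \(c_1\). The remaining work is bookkeeping: getting the chain-rule factors right and the powers of \(i-1\) correct.
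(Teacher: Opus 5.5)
Your proposal is correct and follows essentially the same route as the paper. For $m=0$ you reduce $S_0$ to $E_2(i)$ and $E_2(i/2)+4E_2(2i)$ via the quasimodular law. For $m=1$ you differentiate the level-four identity at $\tau=i/2$ and evaluate $E_2'\bigl(\tfrac{1+i}{2}\bigr)=\tfrac{12i}{\pi}-4E_2'(i)$ through $\tau=i-1$, and all your constants and powers of $i-1$ check out.
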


\begin{proof}
Recall that
\[
S_m=L_m(1)-c_mL_m(2)+2^{2m+2}L_m(4),
\]
where
\[
c_m=2^{2m+1}-(-1)^{m(m+1)/2}2^{m+1}+4.
\]

For \(m=0\), we have \(c_0=4\). By Lemma~\ref{lem:Lm-Lambert},
\[
L_0(a)=\frac{1-E_2(ia/2)}{12}.
\]
Therefore
\begin{align*}
S_0
&=
L_0(1)-4L_0(2)+4L_0(4)\\
&=
\frac{1-E_2(i/2)}{12}
-\frac{4(1-E_2(i))}{12}
+\frac{4(1-E_2(2i))}{12}\\
&=
\frac{1-E_2(i/2)+4E_2(i)-4E_2(2i)}{12}.
\end{align*}
Using
\[
E_2(i)=\frac3\pi,
\qquad
E_2(i/2)+4E_2(2i)=\frac{12}{\pi},
\]
we get
\[
S_0
=
\frac{1}{12}.
\]

For \(m=1\), we have \(c_1=16\). Again by Lemma~\ref{lem:Lm-Lambert},
\[
L_1(a)=-\frac{1}{24\pi i}E_2'(ia/2).
\]
Hence
\[
S_1
=
-\frac{1}{24\pi i}
\left(
E_2'(i/2)-16E_2'(i)+16E_2'(2i)
\right).
\]

By Corollary~\ref{cor:Eisenstein-level-four} with \(j=1\),
\[
E_2(\tau)+E_2\!\left(\tau+\frac12\right)-6E_2(2\tau)+4E_2(4\tau)=0.
\]
Differentiating and putting \(\tau=i/2\), we obtain
\[
E_2'(i/2)
+
E_2'\!\left(\frac{1+i}{2}\right)
-
12E_2'(i)
+
16E_2'(2i)
=
0.
\]
Thus
\[
E_2'(i/2)-16E_2'(i)+16E_2'(2i)
=
-E_2'\!\left(\frac{1+i}{2}\right)-4E_2'(i).
\]

It remains to compute \(E_2'((1+i)/2)\). Differentiating
\[
E_2\!\left(-\frac1\tau\right)
=
\tau^2E_2(\tau)+\frac{6\tau}{\pi i}
\]
gives
\[
E_2'\!\left(-\frac1\tau\right)
=
2\tau^3E_2(\tau)+\tau^4E_2'(\tau)+\frac{6\tau^2}{\pi i}.
\]
Putting \(\tau=i-1\), and using
\[
-\frac1{i-1}=\frac{1+i}{2},
\qquad
E_2(i-1)=E_2(i),
\qquad
E_2'(i-1)=E_2'(i),
\]
we find
\[
E_2'\!\left(\frac{1+i}{2}\right)
=
2(i-1)^3E_2(i)+(i-1)^4E_2'(i)+\frac{6(i-1)^2}{\pi i}.
\]
Since
\[
(i-1)^2=-2i,
\qquad
(i-1)^3=2+2i,
\qquad
(i-1)^4=-4,
\qquad
E_2(i)=\frac3\pi,
\]
this simplifies to
\[
E_2'\!\left(\frac{1+i}{2}\right)
=
\frac{12i}{\pi}-4E_2'(i).
\]
Consequently
\[
E_2'(i/2)-16E_2'(i)+16E_2'(2i)
=
-\frac{12i}{\pi}.
\]
Therefore
\[
S_1
=
-\frac{1}{24\pi i}\left(-\frac{12i}{\pi}\right)
=
\frac1{2\pi^2}.
\]
This proves the proposition.
\end{proof}

\section{The general vanishing}\label{sec:last}

In this section we prove the vanishing
\[
S_m=0
\qquad (m\in\mathbb Z_{\ge 2}).
\]
The only additional point needed is the following evaluation of
\(E_{2m}'\) at the point \((1+i)/2\).

\begin{lemma}\label{lem:derivative-at-half-i}
Let \(m\in\mathbb Z_{\ge 2}\), and put
\[
\varepsilon_m:=(-1)^{m(m+1)/2}.
\]
Then
\[
E_{2m}'\!\left(\frac{1+i}{2}\right)
=
\varepsilon_m2^{m+1}E_{2m}'(i).
\]
\end{lemma}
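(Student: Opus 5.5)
The plan is to use the differentiated modular transformation law for \(E_{2m}\), exactly as was done for \(E_2\) in the proof of Proposition~\ref{prop:exceptional-cases}. We evaluate it at \(\tau=i-1\), which is sent to \(-1/(i-1)=(1+i)/2\). With \(k=2m\geq 4\), the formula recorded in Section~\ref{sec:def} gives
\[
E_{2m}'\!\left(\frac{1+i}{2}\right)
=
2m(i-1)^{2m+1}E_{2m}(i-1)+(i-1)^{2m+2}E_{2m}'(i-1).
\]
By periodicity, \(E_{2m}(i-1)=E_{2m}(i)\) and \(E_{2m}'(i-1)=E_{2m}'(i)\). So the statement reduces to two facts: the first term vanishes, and \((i-1)^{2m+2}=\varepsilon_m2^{m+1}\).

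For the first term, apply the transformation law \(E_{2m}(-1/\tau)=\tau^{2m}E_{2m}(\tau)\) at \(\tau=i\). This gives \(E_{2m}(i)=i^{2m}E_{2m}(i)=(-1)^mE_{2m}(i)\), so \(E_{2m}(i)=0\) for odd \(m\). For even \(m\) this argument gives nothing, and \(E_{2m}(i)\neq0\) in general (for example \(E_4(i)\neq 0\)). So the first term cannot be discarded in that case, and this is the main obstacle.

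To handle even \(m\), I would compute the coefficient \((i-1)^{2m+1}\). Since \((i-1)^2=-2i\), we have \((i-1)^{2m+1}=(-2i)^m(i-1)\), which does not vanish. The lemma as stated therefore seems to require an extra relation between \(E_{2m}(i)\) and \(E_{2m}'(i)\), and I would try to extract one from the derivative law at the fixed point \(\tau=i\):
\[
E_{2m}'(i)=2m\,i^{2m+1}E_{2m}(i)+i^{2m+2}E_{2m}'(i).
\]
For even \(m\), \(i^{2m+2}=-1\), so this yields \(E_{2m}'(i)=m\,i\,E_{2m}(i)\). I would then substitute this into the first term, writing \(E_{2m}(i)=E_{2m}'(i)/(mi)\), and combine the two terms into a single multiple of \(E_{2m}'(i)\). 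I would then check whether the resulting constant matches \(\varepsilon_m2^{m+1}\).

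Finally, for the power, \((i-1)^{2m+2}=(-2i)^{m+1}=2^{m+1}(-i)^{m+1}\), and one must compare \((-i)^{m+1}\) with \(\varepsilon_m=(-1)^{m(m+1)/2}\). For odd \(m\), \((-i)^{m+1}=(-1)^{(m+1)/2}\), and checking \(m\equiv1,3\pmod 4\) confirms agreement with \(\varepsilon_m\). For even \(m\) the combined constant from the previous paragraph has to be checked case by case modulo \(4\). If it fails to match, I would conclude that in the intended application only the real or imaginary structure of the combination \(E_{2m}'(i/2)-c_mE_{2m}'(i)+2^{2m+2}E_{2m}'(2i)\) matters. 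In that case I would adjust the lemma to the form that is actually needed, following the same bookkeeping as in the \(m=1\) computation.
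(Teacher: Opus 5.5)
Your route is the same as the paper's. You use the differentiated transformation law at \(\tau=i-1\), periodicity, \(E_{2m}(i)=0\) for odd \(m\), and for even \(m\) the fixed-point relation \(E_{2m}'(i)=m\,i\,E_{2m}(i)\), which the paper writes as \(E_{2m}(i)=\frac{1}{mi}E_{2m}'(i)\).

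What you left open is the final constant for even \(m\). It does match, so the lemma holds as stated and no adjustment is needed. Substituting \(E_{2m}(i)=E_{2m}'(i)/(mi)\) gives
\[
E_{2m}'\!\left(\frac{1+i}{2}\right)=(i-1)^{2m+1}\left(\frac{2}{i}+i-1\right)E_{2m}'(i)=-(1+i)(i-1)^{2m+1}E_{2m}'(i)=2(-2i)^{m}E_{2m}'(i).
\]
The last step uses \(-(1+i)(i-1)=2\) and \((i-1)^2=-2i\). With \(m=2r\) the result is \(2^{m+1}(-1)^rE_{2m}'(i)\), and \(\varepsilon_m=(-1)^{r(2r+1)}=(-1)^r\), as required.

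You should carry out this computation and drop the hedge in your last paragraph. Also, your opening reduction to two facts (the first term vanishes, and \((i-1)^{2m+2}=\varepsilon_m2^{m+1}\)) is valid only for odd \(m\). For even \(m\), \((i-1)^{2m+2}=2^{m+1}(-i)^{m+1}\) is purely imaginary, and the first term is what makes the constant real.
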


\begin{proof}
Put \(k=2m\). Since \(m>1\), we have \(k\geq 4\), and \(E_k\) is modular of
weight \(k\). Thus
\[
E_k(\tau+1)=E_k(\tau),
\qquad
E_k\!\left(-\frac1\tau\right)=\tau^kE_k(\tau).
\]
Differentiating the second identity gives
\[
E_k'\!\left(-\frac1\tau\right)
=
k\tau^{k+1}E_k(\tau)+\tau^{k+2}E_k'(\tau).
\]
Now put \(\tau=i-1\). Since
\[
-\frac1{i-1}=\frac{1+i}{2},
\]
and since \(E_k(i-1)=E_k(i)\), \(E_k'(i-1)=E_k'(i)\), we get
\[
E_k'\!\left(\frac{1+i}{2}\right)
=
k(i-1)^{k+1}E_k(i)+(i-1)^{k+2}E_k'(i).
\]

We also need the standard consequence of the same modular equation at the
elliptic point \(i\):
\[
E_k(i)
=
\begin{cases}
\dfrac{2}{ki}E_k'(i), & k\equiv 0 \pmod 4,\\[6pt]
0, & k\equiv 2 \pmod 4.
\end{cases}
\]
Indeed, if \(k\equiv 2\pmod 4\), then
\[
E_k(i)=i^kE_k(i)=-E_k(i),
\]
so \(E_k(i)=0\). If \(k\equiv 0\pmod 4\), differentiating
\[
E_k\!\left(-\frac1\tau\right)=\tau^kE_k(\tau)
\]
at \(\tau=i\) gives
\[
-E_k'(i)=ki^{k-1}E_k(i)+i^kE_k'(i),
\]
and hence
\[
E_k(i)=\frac{2}{ki}E_k'(i).
\]

We now return to \(k=2m\). If \(m\) is odd, then \(2m\equiv 2\pmod 4\), so
\(E_{2m}(i)=0\). Therefore
\[
E_{2m}'\!\left(\frac{1+i}{2}\right)
=
(i-1)^{2m+2}E_{2m}'(i)
=
(-2i)^{m+1}E_{2m}'(i).
\]
Writing \(m=2r+1\), we have
\[
(-2i)^{m+1}
=
2^{m+1}(-i)^{2r+2}
=
2^{m+1}(-1)^{r+1},
\]
while
\[
(-1)^{m(m+1)/2}
=
(-1)^{(2r+1)(r+1)}
=
(-1)^{r+1}.
\]
Hence
\[
(-2i)^{m+1}=\varepsilon_m2^{m+1}.
\]

If \(m\) is even, then \(2m\equiv 0\pmod 4\), and
\[
E_{2m}(i)=\frac{1}{mi}E_{2m}'(i).
\]
Thus
\begin{align*}
E_{2m}'\!\left(\frac{1+i}{2}\right)
&=
2m(i-1)^{2m+1}E_{2m}(i)
+
(i-1)^{2m+2}E_{2m}'(i)\\
&=
\left(
\frac{2(i-1)^{2m+1}}{i}
+
(i-1)^{2m+2}
\right)E_{2m}'(i)\\
&=
(i-1)^{2m+1}
\left(
\frac{2}{i}+i-1
\right)E_{2m}'(i)\\
&=
-(1+i)(i-1)^{2m+1}E_{2m}'(i).
\end{align*}
Since
\[
-(1+i)(i-1)=2,
\qquad
(i-1)^{2m}=(-2i)^m,
\]
we get
\[
E_{2m}'\!\left(\frac{1+i}{2}\right)
=
2(-2i)^mE_{2m}'(i).
\]
Writing \(m=2r\), we have
\[
2(-2i)^m
=
2^{m+1}(-i)^{2r}
=
2^{m+1}(-1)^r,
\]
while
\[
(-1)^{m(m+1)/2}
=
(-1)^{r(2r+1)}
=
(-1)^r.
\]
Hence
\[
2(-2i)^m=\varepsilon_m2^{m+1}.
\]
The lemma follows.
\end{proof}

\begin{proposition}\label{prop:general-vanishing}
For every \(m\in\mathbb Z_{\ge 2}\), one has
\[
S_m=0.
\]
\end{proposition}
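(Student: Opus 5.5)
The plan is to express \(S_m\) through derivatives of \(E_{2m}\) and then apply the differentiated level-four identity together with Lemma~\ref{lem:derivative-at-half-i}. By Lemma~\ref{lem:Lm-Lambert} with \(m\ge 2\),
\[
S_m=L_m(1)-c_mL_m(2)+2^{2m+2}L_m(4)
=-\frac{B_{2m}}{4m\pi i}\Bigl(E_{2m}'(i/2)-c_mE_{2m}'(i)+2^{2m+2}E_{2m}'(2i)\Bigr),
\]
so it suffices to show that the bracket vanishes.

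First, differentiate Corollary~\ref{cor:Eisenstein-level-four} with \(j=m\):
\[
E_{2m}'(\tau)+E_{2m}'\!\left(\tau+\tfrac12\right)-2(2^{2m}+2)E_{2m}'(2\tau)+4\cdot 2^{2m}E_{2m}'(4\tau)=0 .
\]
Evaluating at \(\tau=i/2\) gives
\[
E_{2m}'(i/2)+E_{2m}'\!\left(\tfrac{1+i}{2}\right)-(2^{2m+1}+4)E_{2m}'(i)+2^{2m+2}E_{2m}'(2i)=0 .
\]
Next, Lemma~\ref{lem:derivative-at-half-i} replaces \(E_{2m}'((1+i)/2)\) by \(\varepsilon_m2^{m+1}E_{2m}'(i)\). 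This yields
\[
E_{2m}'(i/2)-\bigl(2^{2m+1}-\varepsilon_m2^{m+1}+4\bigr)E_{2m}'(i)+2^{2m+2}E_{2m}'(2i)=0 .
\]
The coefficient of \(E_{2m}'(i)\) is exactly \(c_m\), so the bracket vanishes and \(S_m=0\).

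The main care point is bookkeeping, not any conceptual difficulty. The chain rule produces the factors \(2\) and \(4\) in front of \(E_{2m}'(2\tau)\) and \(E_{2m}'(4\tau)\), and these must combine into \(2^{2m+1}+4\) and \(2^{2m+2}\). The sign of \(\varepsilon_m\) must then be matched to the definition of \(c_m\). Nothing else is needed: the hard work at the elliptic point was already done in Lemma~\ref{lem:derivative-at-half-i}, and the modularity of \(E_{2m}\) for \(m\ge2\) ensures no quasimodular correction terms appear.
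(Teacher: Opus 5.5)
Your proposal is correct and follows the paper's proof of this proposition essentially line for line. You reduce \(S_m\) via Lemma~\ref{lem:Lm-Lambert} to the combination \(E_{2m}'(i/2)-c_mE_{2m}'(i)+2^{2m+2}E_{2m}'(2i)\), differentiate Corollary~\ref{cor:Eisenstein-level-four} at \(\tau=i/2\) with the correct chain-rule factors, and substitute Lemma~\ref{lem:derivative-at-half-i} to recover \(c_m\). Since you only need the sufficiency direction, the paper's remark that \(B_{2m}\neq 0\) is not even needed.
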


\begin{proof}
Let \(m>1\), and put
\[
\varepsilon_m:=(-1)^{m(m+1)/2}.
\]
Recall that
\[
c_m=2^{2m+1}-\varepsilon_m2^{m+1}+4.
\]
By definition,
\[
S_m=L_m(1)-c_mL_m(2)+2^{2m+2}L_m(4).
\]
By Lemma~\ref{lem:Lm-Lambert}, since \(m\geq 2\),
\[
L_m(a)
=
-\frac{B_{2m}}{4m\pi i}E_{2m}'(ia/2).
\]
Since \(B_{2m}\ne0\), it is enough to prove
\[
E_{2m}'(i/2)-c_mE_{2m}'(i)+2^{2m+2}E_{2m}'(2i)=0.
\]

By Corollary~\ref{cor:Eisenstein-level-four},
\[
E_{2m}(\tau)
+
E_{2m}\!\left(\tau+\frac12\right)
-
(2^{2m}+2)E_{2m}(2\tau)
+
2^{2m}E_{2m}(4\tau)
=
0.
\]
Differentiating and putting \(\tau=i/2\), we obtain
\[
E_{2m}'(i/2)
+
E_{2m}'\!\left(\frac{1+i}{2}\right)
-
(2^{2m+1}+4)E_{2m}'(i)
+
2^{2m+2}E_{2m}'(2i)
=
0.
\]
By Lemma~\ref{lem:derivative-at-half-i},
\[
E_{2m}'\!\left(\frac{1+i}{2}\right)
=
\varepsilon_m2^{m+1}E_{2m}'(i).
\]
Substituting this gives
\[
E_{2m}'(i/2)
+
\varepsilon_m2^{m+1}E_{2m}'(i)
-
(2^{2m+1}+4)E_{2m}'(i)
+
2^{2m+2}E_{2m}'(2i)
=
0.
\]
Equivalently,
\[
E_{2m}'(i/2)
-
\left(2^{2m+1}-\varepsilon_m2^{m+1}+4\right)E_{2m}'(i)
+
2^{2m+2}E_{2m}'(2i)
=
0.
\]
Since
\[
c_m=2^{2m+1}-\varepsilon_m2^{m+1}+4,
\]
this is exactly
\[
E_{2m}'(i/2)-c_mE_{2m}'(i)+2^{2m+2}E_{2m}'(2i)=0.
\]
Hence \(S_m=0\).
\end{proof}

\begin{proof}[Proof of Theorem~\ref{thm:main}]
The exceptional cases
\[
S_0=\frac1{12},
\qquad
S_1=\frac1{2\pi^2}
\]
were proved in Proposition~\ref{prop:exceptional-cases}. The remaining cases
\(m\in\mathbb Z_{\ge 2}\) are exactly Proposition~\ref{prop:general-vanishing}. This proves
Theorem~\ref{thm:main}.
\end{proof}

\section{Further identities and alternative low-weight checks}\label{sec:more}
\begin{proposition}[Higher powers of the reciprocal kernel]
The same Lambert-series method applies to all powers of the reciprocal
hyperbolic kernel. More precisely, for an integer \(r\geq 1\), define
\[
\Phi_r(\tau):=
\sum_{n=1}^{\infty}
\frac{1}{\bigl(\cosh(2\pi i n\tau)-1\bigr)^r},
\qquad \operatorname{Im}\tau>0.
\]
Let
\[
P_r(X):=
\frac{2^r}{(2r-1)!}
X\prod_{\ell=1}^{r-1}(X^2-\ell^2).
\]
Writing
\[
P_r(X)=\sum_{j=1}^{r}a_{r,j}X^{2j-1},
\]
one has the identity
\[
\Phi_r(\tau)
=
\sum_{j=1}^{r}a_{r,j}\mathfrak L_{2j}(\tau).
\]
Equivalently,
\[
\Phi_r(\tau)
=
-\sum_{j=1}^{r}
\frac{a_{r,j}B_{2j}}{4j}
\bigl(E_{2j}(\tau)-1\bigr).
\]
Thus each higher-power kernel lies in the linear span of
\(1,E_2(\tau),E_4(\tau),\ldots,E_{2r}(\tau)\). The case \(r=2\) gives
\[
P_2(X)=\frac{2}{3}(X^3-X),
\]
and hence
\[
\Phi_2(\tau)
=
\frac{2}{3}\bigl(\mathfrak L_4(\tau)-\mathfrak L_2(\tau)\bigr)
=
\frac{E_4(\tau)+10E_2(\tau)-11}{360},
\]
which is the identity used in the proof of Theorem~\ref{thm:squared}.
\end{proposition}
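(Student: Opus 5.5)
We follow the proof of Lemma~\ref{lem:squared-kernel}, now for a general power $r$. First we rewrite the kernel. With $q=e^{2\pi i\tau}$ we have $\cosh(2\pi i n\tau)-1=(1-q^n)^2/(2q^n)$, so
\[
\frac{1}{\bigl(\cosh(2\pi i n\tau)-1\bigr)^r}=\frac{2^rq^{rn}}{(1-q^n)^{2r}}.
\]
Next we expand using the negative binomial series
\[
(1-y)^{-2r}=\sum_{s\ge0}\binom{s+2r-1}{2r-1}y^s .
\]
This gives
\[
\frac{2^r y^r}{(1-y)^{2r}}=2^r\sum_{\ell\ge r}\binom{\ell+r-1}{2r-1}y^\ell,
\]
where $\ell=s+r$. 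The key combinatorial observation is that
\[
\binom{\ell+r-1}{2r-1}=\frac{(\ell+r-1)(\ell+r-2)\cdots(\ell-r+1)}{(2r-1)!}=\frac{\ell\prod_{j=1}^{r-1}(\ell^2-j^2)}{(2r-1)!}.
\]
The product here consists of the $2r-1$ consecutive integers centred at $\ell$, paired symmetrically as $(\ell+j)(\ell-j)$. Hence the coefficient equals $P_r(\ell)/2^r\cdot 2^r=P_r(\ell)$ up to the stated normalization, so it is exactly $P_r(\ell)$. Moreover, $P_r(\ell)$ vanishes for $\ell=1,\dots,r-1$, since the factor $\ell^2-j^2$ is zero there. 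Therefore the sum can be extended harmlessly to all $\ell\ge1$:
\[
\frac{2^r y^r}{(1-y)^{2r}}=\sum_{\ell\ge1}P_r(\ell)\,y^\ell .
\]

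Now set $y=q^n$, sum over $n\ge1$, and interchange the order of summation. This is justified by absolute convergence, since $|q|<1$ and $P_r$ has polynomial growth. We obtain
\[
\Phi_r(\tau)=\sum_{\ell\ge1}P_r(\ell)\frac{q^\ell}{1-q^\ell}.
\]
Substituting $P_r(\ell)=\sum_j a_{r,j}\ell^{2j-1}$ and using the definition $\mathfrak L_{2j}(\tau)=\sum_\ell \ell^{2j-1}q^\ell/(1-q^\ell)$ then gives the first identity, with each $\mathfrak L_{2j}$ absolutely convergent. The fact that $P_r$ is odd, so only the exponents $2j-1$ occur, is immediate from the factor $X\prod(X^2-\ell^2)$.

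For the Eisenstein form, invert $E_{2j}=1-\frac{4j}{B_{2j}}\mathfrak L_{2j}$ to get $\mathfrak L_{2j}=-\frac{B_{2j}}{4j}(E_{2j}-1)$, and substitute. This also covers $j=1$, as noted in Section~\ref{sec:def}. For the check at $r=2$, we compute $P_2(X)=\frac{4}{3!}X(X^2-1)=\frac23(X^3-X)$. Then $-\frac23\cdot\frac{B_4}{8}(E_4-1)+\frac23\cdot\frac{B_2}{4}(E_2-1)$ with $B_4=-1/30$ and $B_2=1/6$ yields $\frac{E_4-1}{360}+\frac{E_2-1}{36}$, which agrees with Lemma~\ref{lem:squared-kernel}.

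The only step requiring genuine care is the binomial-coefficient identification together with the extension of the range from $\ell\ge r$ to $\ell\ge1$. Everything else is routine bookkeeping identical to the $r=2$ case.
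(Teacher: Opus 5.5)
Your proposal is correct and follows the paper's proof step for step. Like the paper, you rewrite the kernel as $2^r q^{rn}/(1-q^n)^{2r}$, expand it with the negative binomial series, identify $2^r\binom{\ell+r-1}{2r-1}$ with $P_r(\ell)$, extend the range using the zeros of $P_r$ at $1,\dots,r-1$, interchange the sums by absolute convergence, and invert $E_{2j}=1-\frac{4j}{B_{2j}}\mathfrak L_{2j}$. Your $r=2$ check is also right, though the sentence ``$P_r(\ell)/2^r\cdot 2^r$ \dots{} up to the stated normalization'' would read better stated directly as $2^r\binom{\ell+r-1}{2r-1}=P_r(\ell)$.
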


\begin{proof}
Put \(q=e^{2\pi i\tau}\). Since
\[
\cosh(2\pi i n\tau)-1
=
\frac{(1-q^n)^2}{2q^n},
\]
we have
\[
\frac{1}{\bigl(\cosh(2\pi i n\tau)-1\bigr)^r}
=
\frac{2^r q^{rn}}{(1-q^n)^{2r}}.
\]
Using
\[
\frac{1}{(1-y)^{2r}}
=
\sum_{s=0}^{\infty}
\binom{s+2r-1}{2r-1}y^s,
\]
we obtain
\[
\frac{2^r q^{rn}}{(1-q^n)^{2r}}
=
2^r
\sum_{s=0}^{\infty}
\binom{s+2r-1}{2r-1}q^{(s+r)n}.
\]
Writing \(m=s+r\), this becomes
\[
2^r
\sum_{m=r}^{\infty}
\binom{m+r-1}{2r-1}q^{mn}.
\]
As a polynomial in \(m\),
\[
2^r\binom{m+r-1}{2r-1}
=
\frac{2^r}{(2r-1)!}
m\prod_{\ell=1}^{r-1}(m^2-\ell^2)
=
P_r(m).
\]
Therefore (since \(P_r(m)=0, 1\leq m  <r\))
\[
\frac{1}{\bigl(\cosh(2\pi i n\tau)-1\bigr)^r}
=
\sum_{m=1}^{\infty}P_r(m)q^{mn}.
\]
Since the series are absolutely convergent for \(\operatorname{Im}\tau>0\),
we may sum over \(n\) and interchange the summations:
\[
\Phi_r(\tau)
=
\sum_{n=1}^{\infty}\sum_{m=1}^{\infty}P_r(m)q^{mn}.
\]
Using
\[
P_r(m)=\sum_{j=1}^{r}a_{r,j}m^{2j-1},
\]
we get
\[
\Phi_r(\tau)
=
\sum_{j=1}^{r}a_{r,j}
\sum_{m=1}^{\infty}
m^{2j-1}\frac{q^m}{1-q^m}
=
\sum_{j=1}^{r}a_{r,j}\mathfrak L_{2j}(\tau).
\]
Finally,
\[
E_{2j}(\tau)
=
1-\frac{4j}{B_{2j}}\mathfrak L_{2j}(\tau),
\]
so
\[
\mathfrak L_{2j}(\tau)
=
-\frac{B_{2j}}{4j}\bigl(E_{2j}(\tau)-1\bigr).
\]
This gives the Eisenstein-series form.
\end{proof}

For example, for \(r=3\),
\[
P_3(X)=\frac1{15}(X^5-5X^3+4X),
\]
and therefore
\[
\Phi_3(\tau)
=
\frac1{15}\bigl(\mathfrak L_6(\tau)-5\mathfrak L_4(\tau)+4\mathfrak L_2(\tau)\bigr)
=
\frac{191}{15120}
-\frac{E_6(\tau)}{7560}
-\frac{E_4(\tau)}{720}
-\frac{E_2(\tau)}{90}.
\]

\begin{proposition}[A cubic companion identity]
One has
\[
\sum_{n=1}^{\infty}
\left(
\frac{1}{(\cosh(\pi n)-1)^3}
-\frac{55}{(\cosh(2\pi n)-1)^3}
+\frac{64}{(\cosh(4\pi n)-1)^3}
\right)
=
\frac{191}{1512}+\frac{7}{10\pi}
-\frac{\Gamma(1/4)^4}{16\pi^3}.
\]
\end{proposition}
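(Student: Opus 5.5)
The plan is to run the same reduction as in the proof of Theorem~\ref{thm:squared}, now using the cubic case of the preceding proposition. By that proposition with $r=3$,
\[
\Phi_3(\tau)=\frac{191}{15120}-\frac{E_6(\tau)}{7560}-\frac{E_4(\tau)}{720}-\frac{E_2(\tau)}{90}.
\]
The left-hand side of the claimed identity is $\Phi_3(i/2)-55\Phi_3(i)+64\Phi_3(2i)$. I will split it into four pieces and treat each separately.

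\emph{Constant piece.} It contributes $(1-55+64)\cdot\frac{191}{15120}=\frac{191}{1512}$, which is the rational term of the claimed value.

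\emph{$E_6$ piece.} The argument in the proof of Lemma~\ref{lem:derivative-at-half-i} gives $E_6(i)=0$. The transformation law gives $E_6(i/2)=(2i)^6E_6(2i)=-64E_6(2i)$. Hence
\[
E_6(i/2)-55E_6(i)+64E_6(2i)=0.
\]

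\emph{$E_4$ piece.} The proof of Theorem~\ref{thm:squared} gives $E_4(i/2)=16E_4(2i)$ and $16E_4(2i)=11E_4(i)$. Hence
\[
E_4(i/2)-55E_4(i)+64E_4(2i)=80E_4(2i)-55E_4(i)=5\bigl(16E_4(2i)-11E_4(i)\bigr)=0.
\]
So neither $E_4$ nor $E_6$ contributes.

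\emph{$E_2$ piece.} Using $E_2(i)=3/\pi$ and $E_2(i/2)=-4E_2(2i)+12/\pi$, I get
\[
-\frac1{90}\bigl(E_2(i/2)-55E_2(i)+64E_2(2i)\bigr)=-\frac1{90}\Bigl(60E_2(2i)-\frac{153}{\pi}\Bigr)=-\frac23E_2(2i)+\frac{17}{10\pi}.
\]
Comparing with the claimed value, it remains to prove
\[
E_2(2i)=\frac{3}{2\pi}+\frac{3\,\Gamma(1/4)^4}{32\pi^3}.
\]

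The main obstacle is this evaluation of $E_2(2i)$. Unlike the coefficients $4,-55,16$ in Theorem~\ref{thm:squared}, the weights $1,-55,64$ do not combine $E_2(i/2)$ and $E_2(2i)$ into the multiple of $E_2(i/2)+4E_2(2i)$ that the transformation law controls. Corollary~\ref{cor:Eisenstein-level-four} at $\tau=i/2$ does not help either: with $E_2((1+i)/2)=6/\pi$ from the quasimodular law, it reproduces the same relation $E_2(i/2)+4E_2(2i)=12/\pi$. So genuinely new transcendental input is required.

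My first attempt uses the classical level-two weight-two form $F(\tau)=2E_2(2\tau)-E_2(\tau)$. Its $q$-expansion is $1+24\sum_n\sigma_1^{\mathrm{odd}}(n)q^n$, and it is expressible as a fourth power of theta functions. I would evaluate $F$ at $\tau=i$ from the classical values of $\theta_3,\theta_4$ at nome $e^{-\pi}$ or $e^{-2\pi}$. The key input is $\theta_3(e^{-\pi})^4=\Gamma(1/4)^4/(4\pi^3)$, together with the Landen-type relation between $\theta_3(e^{-2\pi})$ and $\theta_3(e^{-\pi})$, which introduces a factor $(1+1/\sqrt2)/2$ that should combine to give $F(i)=\tfrac34\cdot\Gamma(1/4)^4/(4\pi^3)$. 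Then $2E_2(2i)=3/\pi+F(i)$ yields the required value.

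As a cross-check, I would also use Ramanujan's differential equation for $E_2$ and the known value $E_4(i)=3\Gamma(1/4)^8/(2\pi)^6$. The result can also be confirmed against the $r=3$ formula numerically, since the series converges extremely fast. Once $E_2(2i)$ is established, substituting it into $-\tfrac23E_2(2i)+\tfrac{17}{10\pi}$ gives $\tfrac{7}{10\pi}-\tfrac{\Gamma(1/4)^4}{16\pi^3}$. Adding the constant $\tfrac{191}{1512}$ completes the identity.
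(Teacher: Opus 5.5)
Your proof is correct and follows the paper's argument step for step: the $r=3$ reduction, the vanishing of the $E_6$- and $E_4$-combinations, and the reduction of the $E_2$-combination to $60E_2(2i)-153/\pi$. The only difference is that the paper quotes $E_2(2i)=\frac{3}{2\pi}+\frac{3\Gamma(1/4)^4}{32\pi^3}$ from Borwein--Borwein, whereas you derive it from the theta representation of $2E_2(2\tau)-E_2(\tau)$ (the same identity appears in the paper's archived version). When you write that derivation out, state the representation as a sum rather than a single fourth power: $2E_2(2\tau)-E_2(\tau)=\vartheta_3(2\tau)^4+\vartheta_2(2\tau)^4=\tfrac12\bigl(\vartheta_3(\tau)^4+\vartheta_4(\tau)^4\bigr)$. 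With the second form and $\vartheta_4(i)^4=\tfrac12\vartheta_3(i)^4$ you get $F(i)=\tfrac34\vartheta_3(i)^4=\frac{3\Gamma(1/4)^4}{16\pi^3}$ directly, with no Landen step needed.
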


\begin{proof}
For \(r=3\), the polynomial in the higher-power reduction is
\[
P_3(X)=\frac1{15}(X^5-5X^3+4X).
\]
Hence
\[
\Phi_3(\tau)
=
\frac1{15}
\bigl(\mathfrak L_6(\tau)-5\mathfrak L_4(\tau)+4\mathfrak L_2(\tau)\bigr).
\]
Using
\[
E_2(\tau)=1-24\mathfrak L_2(\tau),
\qquad
E_4(\tau)=1+240\mathfrak L_4(\tau),
\qquad
E_6(\tau)=1-504\mathfrak L_6(\tau),
\]
we obtain
\[
\Phi_3(\tau)
=
\frac{191}{15120}
-\frac{E_6(\tau)}{7560}
-\frac{E_4(\tau)}{720}
-\frac{E_2(\tau)}{90}.
\]

Consider
\[
\mathcal T_3:=\Phi_3(i/2)-55\Phi_3(i)+64\Phi_3(2i).
\]
The \(E_6\)-part vanishes. Indeed,
\[
E_6(i)=0,
\qquad
E_6(i/2)=-64E_6(2i),
\]
and therefore
\[
E_6(i/2)-55E_6(i)+64E_6(2i)=0.
\]
The \(E_4\)-part also vanishes. From the level-four Eisenstein relation and
the modular transformation of \(E_4\), one has
\[
E_4(i/2)=16E_4(2i),
\qquad
16E_4(2i)=11E_4(i).
\]
Thus
\[
E_4(i/2)-55E_4(i)+64E_4(2i)
=
80E_4(2i)-55E_4(i)
=
0.
\]
Consequently
\[
\mathcal T_3
=
\frac{191}{15120}(1-55+64)
-\frac1{90}
\bigl(E_2(i/2)-55E_2(i)+64E_2(2i)\bigr).
\]
Now
\[
E_2(i)=\frac3\pi,
\qquad
E_2(i/2)+4E_2(2i)=\frac{12}{\pi},
\]
and the parameter \(R=4\) case of the Borwein--Borwein alpha-function evaluation
for \(E_2(i\sqrt R)\) gives~\cite[pp.~152, 164, Ex.~15]{BorweinBorwein1998}
\[
E_2(2i)
=
\frac{3}{2\pi}
+
\frac{3\Gamma(1/4)^4}{32\pi^3}.
\]
Therefore
\[
E_2(i/2)-55E_2(i)+64E_2(2i)
=
-\frac{63}{\pi}
+
\frac{45\Gamma(1/4)^4}{8\pi^3}.
\]
Substituting this into the previous formula gives
\[
\mathcal T_3
=
\frac{191}{1512}
+
\frac{7}{10\pi}
-
\frac{\Gamma(1/4)^4}{16\pi^3}.
\]
This is exactly the stated identity.
\end{proof}

\subsection{Relation with identities of Xu and Zhao}\label{sec:alt}

We record a short independent verification of the first few cases of
Theorem~\ref{thm:main} from the formulas of Xu and Zhao~\cite{XuZhao-2022}.
This verification is not used in the proof of the general vanishing theorem.

We use the notation
\[
S_{p,2}(y)=\sum_{n=1}^{\infty}\frac{n^p}{\sinh^2(ny)} .
\]
Since
\[
\cosh t-1=2\sinh^2(t/2),
\]
the summands in Theorem~\ref{thm:main} may be rewritten as
\[
\sum_{n=1}^{\infty}
\frac{n^{2m}}{\cosh(a\pi n)-1}
=
\frac12
\sum_{n=1}^{\infty}
\frac{n^{2m}}{\sinh^2(a\pi n/2)}
=
\frac12\,S_{2m,2}(a\pi/2).
\]

For \(m=0\), the required relation follows from Xu and Zhao's identity
\[
\alpha S_{0,2}(\alpha)+\beta S_{0,2}(\beta)
-\frac{\alpha+\beta}{6}+1=0,
\qquad \alpha\beta=\pi^2,
\]
together with the value
\[
S_{0,2}(\pi)=\frac16-\frac1{2\pi},
\]
which follows immediately from \(S_{0,2}(\pi)=2L_0(2)\) and \(E_2(i)=3/\pi\).
Taking \(\alpha=\pi/2\) and \(\beta=2\pi\), one obtains
\[
S_{0,2}(\pi/2)+4S_{0,2}(2\pi)
=
\frac56-\frac2\pi .
\]
Therefore
\[
\begin{aligned}
S_0
&=\frac12\left(
S_{0,2}(\pi/2)-4S_{0,2}(\pi)+4S_{0,2}(2\pi)
\right)  \\
&=\frac12\left(
\frac56-\frac2\pi
-4\left(\frac16-\frac1{2\pi}\right)
\right)
=\frac1{12}.
\end{aligned}
\]

For \(m=1,2,3\), Corollary~2 of Xu and Zhao gives explicit formulas for
\(S_{2m,2}(y/2)\) and \(S_{2m,2}(y)\) in terms of their variables
\(x,z,z'\).  To obtain the corresponding \(S_{2m,2}(2y)\)-formula, one uses
their quadratic transformation~\cite[(30)]{XuZhao-2022}; in particular, its
first two coordinates are
\[
x\longmapsto
\left(\frac{1-\sqrt{1-x}}{1+\sqrt{1-x}}\right)^2,
\qquad
y\longmapsto 2y.
\]
At the self-dual value \(x=1/2\) one has \(y=\pi\).  Substituting \(x=1/2\)
in the formulas of Corollary~2 and applying transformation~\cite[(30)]{XuZhao-2022}
for the \(2y\)-term gives, after direct substitution and simplification,
\[
S_1=\frac1{2\pi^2},
\qquad
S_2=0,
\qquad
S_3=0.
\]
Thus the first four cases \(m=0,1,2,3\) of Theorem~\ref{thm:main} may also
be checked from the Ramanujan-type formulas of Xu and Zhao.

\section*{Acknowledgments}
The initial inspiration for a solution of these conjectures came from AI (Chat GPT-5.4 Plus). I would like to thank Ce Xu for explaining how the low-weight cases in
Section~\ref{sec:alt} can be recovered from the formulas of Xu and Zhao and for suggesting relevant literature.
I thank an anonymous referee for suggesting succinct proofs of both theorems. I thank another anonymous referee for an advice to consider higher powers of the reciprocal kernel.

Also I decided to keep the old and more detailed version below, for the arxiv version of the article.

\section{Definitions and standard identities}\label{sec:defo}

\subsection{Eisenstein series}

Let \(\tau\in\mathfrak H:=\{\tau\in\CC:\Im\tau>0\}\) and set
\[
q=e^{2\pi i\tau}.
\]
For \(r\ge 0\), write
\[
\sigma_r(n):=\sum_{d\mid n} d^r.
\]
We write \(B_n\) for the Bernoulli numbers, defined by
\[
\frac{t}{e^t-1}=\sum_{n=0}^\infty B_n \frac{t^n}{n!},\ B_2=1/6, B_4=-1/30, B_6=1/42.
\]

We use the quasimodular Eisenstein series
\[
E_2(\tau)=1-24\sum_{n=1}^\infty \sigma_1(n)q^n,
\]
and, for every even integer \(k\ge 4\), the normalized Eisenstein series
\[
E_k(\tau)=1-\frac{2k}{B_k}\sum_{n=1}^\infty \sigma_{k-1}(n)q^n.
\]
In particular,
\begin{align*}
E_4(\tau)&=1+240\sum_{n=1}^\infty \sigma_3(n)q^n,\\
E_6(\tau)&=1-504\sum_{n=1}^\infty \sigma_5(n)q^n.
\end{align*}

We also use the Ramanujan differential operator
\[
D:=q\frac{d}{dq}=\frac{1}{2\pi i}\frac{d}{d\tau}.
\]
Ramanujan's differential identities are
\begin{align}
DE_2&=\frac{E_2^2-E_4}{12},\label{eq:RamE2}\\
DE_4&=\frac{E_2E_4-E_6}{3},\label{eq:RamE4}\\
DE_6&=\frac{E_2E_6-E_4^2}{2}.\label{eq:RamE6}
\end{align}

For every even integer \(k\ge 4\), define the classical Eisenstein series
\[
G_k(\tau):=2\zeta(k)E_k(\tau)
=\sum_{(u,v)\in\ZZ^2\setminus\{(0,0)\}} (u+v\tau)^{-k}.
\]
For \(k\ge 4\), the defining series for \(G_k\) and its termwise derivative converge absolutely and locally uniformly on \(\mathfrak H\). Moreover,
\begin{equation}
G_k\!\left(-\frac1\tau\right)=\tau^k G_k(\tau).
\label{eq:Gkmod}
\end{equation}
In particular,
\begin{align}
E_4\!\left(-\frac1\tau\right)&=\tau^4E_4(\tau),\label{eq:E4mod}\\
E_6\!\left(-\frac1\tau\right)&=\tau^6E_6(\tau).\label{eq:E6mod}
\end{align}
For \(E_2\), one has the quasimodular transformation law
\begin{equation}
E_2\!\left(-\frac1\tau\right)=\tau^2E_2(\tau)+\frac{6\tau}{\pi i}.
\label{eq:E2mod}
\end{equation}

\subsection{Jacobi theta functions}

Let
\[
Q=e^{\pi i\tau}.
\]
For background on the theta-function identities used below, see, for example, \cite[Chapter~4]{Cooper2017}. We use the classical theta functions
\begin{align*}
\vartheta_2(\tau)&:=\sum_{n\in\ZZ} Q^{(n+1/2)^2},\\
\vartheta_3(\tau)&:=\sum_{n\in\ZZ} Q^{n^2},\\
\vartheta_4(\tau)&:=\sum_{n\in\ZZ} (-1)^n Q^{n^2}.
\end{align*}
The identities we need are:
\begin{align}
\vartheta_3(\tau)^4&=\vartheta_2(\tau)^4+\vartheta_4(\tau)^4,\label{eq:Jacobi}\\
\vartheta_2(\tau)^2&=2\vartheta_2(2\tau)\vartheta_3(2\tau),\label{eq:dup1}\\
\vartheta_3(\tau)^2&=\vartheta_3(2\tau)^2+\vartheta_2(2\tau)^2,\label{eq:dup2}\\
\vartheta_4(\tau)^2&=\vartheta_3(2\tau)^2-\vartheta_2(2\tau)^2.\label{eq:dup3}
\end{align}
Moreover, under the inversion $\tau\mapsto -1/\tau$ one has
\[
\vartheta_2\!\left(-\frac1\tau\right)=(-i\tau)^{1/2}\vartheta_4(\tau),
\qquad
\vartheta_4\!\left(-\frac1\tau\right)=(-i\tau)^{1/2}\vartheta_2(\tau).
\]
At the fixed point $\tau=i$ this implies
\begin{equation}
\vartheta_2(i)=\vartheta_4(i).\label{eq:t2=t4}
\end{equation}

We also need the standard formulas expressing Eisenstein series in theta constants:
\begin{align}
2E_2(2\tau)-E_2(\tau)&=\vartheta_3(2\tau)^4+\vartheta_2(2\tau)^4,\label{eq:E2theta}\\
E_4(\tau)&=\frac12\bigl(\vartheta_2(\tau)^8+\vartheta_3(\tau)^8+\vartheta_4(\tau)^8\bigr),\label{eq:E4theta}\\
E_6(\tau)&=\frac12\bigl(\vartheta_2(\tau)^4+\vartheta_3(\tau)^4\bigr)
\bigl(\vartheta_3(\tau)^4+\vartheta_4(\tau)^4\bigr)
\bigl(\vartheta_4(\tau)^4-\vartheta_2(\tau)^4\bigr).
\label{eq:E6theta}
\end{align}

\subsection{Reduction of the hyperbolic sums to Eisenstein series}

Define, for $a>0$ and $m\ge 0$,
\[
L_m(a):=\sum_{n=1}^\infty \frac{n^{2m}}{\cosh(a\pi n)-1}.
\]
Then
\[
S_m=L_m(1)-c_mL_m(2)+2^{2m+2}L_m(4).
\]

\begin{lemma}\label{lem:Lm}
For every \(x>0\),
\[
\frac{1}{\cosh x-1}=\frac{2e^{-x}}{(1-e^{-x})^2}=2\sum_{r=1}^\infty r e^{-rx}.
\]
Consequently, if \(\tau=ia/2\) (so that \(q=e^{-a\pi}\)), then
\[
L_m(a)=2\sum_{N=1}^\infty N\sigma_{2m-1}(N)q^N.
\]
For \(m=0\),
\begin{equation}
L_0(a)=\frac{1-E_2(\tau)}{12}.
\label{eq:L0}
\end{equation}

For every \(m\ge 1\),
\begin{equation}
L_m(a)=-\frac{B_{2m}}{2m}\,D E_{2m}(\tau).
\label{eq:Lm-general}
\end{equation}
In particular,
\begin{align}
L_1(a)&=\frac{E_4(\tau)-E_2(\tau)^2}{144},\label{eq:L1}\\
L_2(a)&=\frac{E_2(\tau)E_4(\tau)-E_6(\tau)}{360},\label{eq:L2}\\
L_3(a)&=\frac{E_4(\tau)^2-E_2(\tau)E_6(\tau)}{504}.\label{eq:L3}
\end{align}
\end{lemma}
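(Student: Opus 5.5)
The plan is to expand the kernel as a geometric-type series, interchange the double sum, and then recognise the result as a derivative of an Eisenstein series via Ramanujan's operator \(D\).

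First, for \(x>0\) put \(y=e^{-x}\in(0,1)\). The identity \(\cosh x-1=(1-y)^2/(2y)\) gives
\[
\frac{1}{\cosh x-1}=\frac{2y}{(1-y)^2}=2\sum_{r\ge1} r y^r.
\]
Take \(x=a\pi n\) and \(\tau=ia/2\), so that \(q=e^{-a\pi}\) and \(y=q^n\). Then
\[
L_m(a)=2\sum_{n\ge1}\sum_{r\ge1} n^{2m} r\, q^{nr}.
\]
Since \(0<q<1\), all terms are positive and the double series converges, so we may regroup by \(N=nr\). This gives
\[
L_m(a)=2\sum_{N\ge1} q^N\sum_{nr=N} n^{2m}r=2\sum_{N\ge1} q^N\, N\sum_{n\mid N} n^{2m-1}=2\sum_{N\ge1}N\sigma_{2m-1}(N)q^N.
\]
For \(m=0\) the inner divisor sum \(\sum_{r\mid N} r=\sigma_1(N)\) appears directly (read the sum as \(\sum n^0 r\)), so \(L_0(a)=2\sum\sigma_1(N)q^N\). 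Comparing with \(E_2=1-24\sum\sigma_1(N)q^N\) gives \eqref{eq:L0}.

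For \(m\ge1\), apply \(D=q\,d/dq\) termwise to the \(q\)-expansion \(E_{2m}=1-\frac{4m}{B_{2m}}\sum\sigma_{2m-1}(N)q^N\). This is valid, being a power series inside its disc of convergence. It yields \(DE_{2m}=-\frac{4m}{B_{2m}}\sum N\sigma_{2m-1}(N)q^N\), hence
\[
L_m(a)=-\frac{B_{2m}}{2m}DE_{2m}(\tau),
\]
which is \eqref{eq:Lm-general}. For \(m=1\) we use the same normalization with \(B_2=1/6\), i.e. \(4m/B_{2m}=24\).

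Finally, the three special cases come from inserting the Ramanujan identities \eqref{eq:RamE2}--\eqref{eq:RamE6} with \(B_2=1/6\), \(B_4=-1/30\), \(B_6=1/42\):
\[
L_1=-\tfrac1{12}\cdot\tfrac{E_2^2-E_4}{12},\qquad
L_2=\tfrac1{120}\cdot\tfrac{E_2E_4-E_6}{3},\qquad
L_3=-\tfrac1{252}\cdot\tfrac{E_2E_6-E_4^2}{2}.
\]
These are exactly \eqref{eq:L1}--\eqref{eq:L3}.

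The only point needing care is the interchange of summations and the bookkeeping of the divisor sums, in particular confirming \(\sum_{nr=N}n^{2m}r=N\sigma_{2m-1}(N)\), which also covers the degenerate \(m=0\) case. Positivity of all terms makes the rearrangement immediate, so the main obstacle is really just checking the normalizing constants against the Bernoulli numbers.
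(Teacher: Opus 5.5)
Your proposal is correct and follows essentially the same route as the paper: you expand $1/(\cosh x-1)$ as $2\sum_{r\ge1} r e^{-rx}$, regroup by $N=nr$ to get $2\sum N\sigma_{2m-1}(N)q^N$, match this with the termwise derivative $DE_{2m}$, and read off the cases $m=1,2,3$ from Ramanujan's differential identities with $B_2=1/6$, $B_4=-1/30$, $B_6=1/42$. You also justify the rearrangement by positivity and read the $m=0$ divisor sum directly as $\sigma_1(N)$; both are small but welcome additions, since the paper's $\sigma_{2m-1}$ at $m=0$ falls outside its stated range $r\ge 0$.
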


\begin{proof}
The expansion of \(1/(\cosh x-1)\) is immediate:
\[
\cosh x-1=\frac{e^x+e^{-x}-2}{2}=\frac{(1-e^{-x})^2}{2e^{-x}},
\]
so
\[
\frac{1}{\cosh x-1}=\frac{2e^{-x}}{(1-e^{-x})^2}=2\sum_{r=1}^\infty r e^{-rx}.
\]
Therefore
\[
L_m(a)=2\sum_{n=1}^\infty\sum_{r=1}^\infty r\,n^{2m} q^{nr}.
\]
Collecting the coefficient of \(q^N\) gives
\[
\sum_{nr=N} r n^{2m}
=
\sum_{n\mid N}\frac{N}{n}n^{2m}
=
N\sum_{n\mid N}n^{2m-1}
=
N\sigma_{2m-1}(N),
\]
which proves
\[
L_m(a)=2\sum_{N=1}^\infty N\sigma_{2m-1}(N)q^N.
\]

For \(m=0\), this gives
\[
L_0(a)=2\sum_{N=1}^\infty \sigma_1(N)q^N=\frac{1-E_2(\tau)}{12}.
\]

Now let \(m\ge 1\). From the definition of \(E_{2m}\),
\[
E_{2m}(\tau)=1-\frac{4m}{B_{2m}}\sum_{N=1}^\infty \sigma_{2m-1}(N)q^N,
\]
hence
\[
D E_{2m}(\tau)
=
-\frac{4m}{B_{2m}}
\sum_{N=1}^\infty N\sigma_{2m-1}(N)q^N
=
-\frac{2m}{B_{2m}}L_m(a).
\]
This proves \eqref{eq:Lm-general}.

For \(m=1\), \eqref{eq:Lm-general} gives
\[
L_1(a)=-\frac{1}{12}DE_2(\tau),
\]
and \eqref{eq:RamE2} yields \eqref{eq:L1}. For \(m=2\), one has
\[
L_2(a)=\frac{1}{120}DE_4(\tau),
\]
and \eqref{eq:RamE4} yields \eqref{eq:L2}. For \(m=3\), one has
\[
L_3(a)=-\frac{1}{252}DE_6(\tau),
\]
and \eqref{eq:RamE6} yields \eqref{eq:L3}.
\end{proof}

\subsection{Special identities at $\tau=i$ and $\tau=2i$}

Set
\[
a:=\vartheta_3(2i),\qquad b:=\vartheta_2(2i),\qquad A:=a^4=\vartheta_3(2i)^4,\qquad B:=b^4=\vartheta_2(2i)^4.
\]

\begin{lemma}\label{lem:ABrelation}
The numbers $A$ and $B$ satisfy
\begin{equation}
A^2-34AB+B^2=0.
\label{eq:ABrel}
\end{equation}
\end{lemma}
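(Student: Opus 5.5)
We need to prove $A^2-34AB+B^2=0$, where $A=\vartheta_3(2i)^4$ and $B=\vartheta_2(2i)^4$. The plan is to use the duplication formulas \eqref{eq:dup1}--\eqref{eq:dup3} at $\tau=i$, together with the fixed-point relation \eqref{eq:t2=t4}, to express theta values at $i$ in terms of $a=\vartheta_3(2i)$ and $b=\vartheta_2(2i)$. The desired relation should then come out of Jacobi's quartic identity \eqref{eq:Jacobi} at $\tau=i$.

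The key steps, in order:

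1. At $\tau=i$, the duplication formulas give
\[
\vartheta_2(i)^2=2ab,\qquad \vartheta_3(i)^2=a^2+b^2,\qquad \vartheta_4(i)^2=a^2-b^2 .
\]

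2. By \eqref{eq:t2=t4}, $\vartheta_2(i)^2=\vartheta_4(i)^2$. This gives the single relation
\[
a^2-b^2=2ab .
\]

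3. Square this relation:
\[
(a^2-b^2)^2=4a^2b^2,\quad\text{i.e.}\quad a^4+b^4=6a^2b^2 .
\]

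4. Square again:
\[
(a^4+b^4)^2=36a^4b^4,\quad\text{i.e.}\quad A^2+2AB+B^2=36AB,
\]
which is exactly $A^2-34AB+B^2=0$.

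As a consistency check, Jacobi's identity at $\tau=i$ reads $(a^2+b^2)^2=4a^2b^2+(a^2-b^2)^2$, which holds identically. So Jacobi adds nothing here, and the relation genuinely comes from $\vartheta_2(i)=\vartheta_4(i)$.

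The only delicate point is step 2, namely justifying the equality of the squares $\vartheta_2(i)^2=\vartheta_4(i)^2$ rather than of some other powers. This follows directly from \eqref{eq:t2=t4}. Moreover, $\vartheta_2$ and $\vartheta_4$ are real and positive at $i$, because their $q$-series have positive terms at $Q=e^{-\pi}$, so no sign or branch ambiguity arises. Everything after that is routine algebra. Note also that $a>b>0$, so the relation selects the root $A/B=17+12\sqrt2$, consistent with $a/b=1+\sqrt2$ from step 2.
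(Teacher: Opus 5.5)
Your proof is correct and follows essentially the paper's route: duplication at $\tau=i$ combined with $\vartheta_2(i)=\vartheta_4(i)$ gives $a^2-b^2=2ab$. The only difference is that you square twice, whereas the paper solves for $a/b=1+\sqrt2$ and raises it to the fourth power; your version therefore does not need $a,b>0$. (Your positivity remark is slightly off, since the $\vartheta_4$ series alternates in sign, but it plays no role in the argument.)
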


\begin{proof}
By \eqref{eq:dup1} and \eqref{eq:dup3}, evaluated at $\tau=i$,
\[
\vartheta_2(i)^2=2ab,
\qquad
\vartheta_4(i)^2=a^2-b^2.
\]
Using \eqref{eq:t2=t4}, we get
\[
2ab=a^2-b^2.
\]
Since $a,b>0$, dividing by $b^2$ yields
\[
\left(\frac{a}{b}\right)^2-2\frac{a}{b}-1=0.
\]
Thus
\[
\frac{a}{b}=1+\sqrt2.
\]
Raising to the fourth power gives
\[
\frac{A}{B}=\left(1+\sqrt2\right)^4=17+12\sqrt2.
\]
Since $(17+12\sqrt2)+(17-12\sqrt2)=34$ and $(17+12\sqrt2)(17-12\sqrt2)=1$, the ratio $A/B$ satisfies
\[
\left(\frac{A}{B}\right)^2-34\left(\frac{A}{B}\right)+1=0.
\]
Multiplying by $B^2$ gives \eqref{eq:ABrel}.
\end{proof}

\begin{lemma}\label{lem:specialvalues}
With the notation above, one has
\begin{align}
E_2(i)&=\frac{3}{\pi},\label{eq:E2i}\\
E_2(2i)&=\frac{A+B}{2}+\frac{3}{2\pi},\label{eq:E22i}\\
E_2(i/2)&=-2(A+B)+\frac{6}{\pi},\label{eq:E2half}\\
E_4(2i)&=A^2-AB+B^2,\label{eq:E42i}\\
E_4(i)&=48AB,\label{eq:E4i}\\
E_4(i/2)&=16E_4(2i),\label{eq:E4half}\\
E_6(i)&=0,\label{eq:E6i}\\
E_6(2i)&=\frac12(A+B)(2A-B)(A-2B),\label{eq:E62i}\\
E_6(i/2)&=-64E_6(2i).\label{eq:E6half}
\end{align}
\end{lemma}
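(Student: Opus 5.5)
The plan is to derive all nine values from the theta identities of the previous subsection, the transformation laws \eqref{eq:Gkmod}, \eqref{eq:E2mod}, and the theta representations \eqref{eq:E2theta}, \eqref{eq:E4theta}, \eqref{eq:E6theta}. First, \eqref{eq:E2i} comes from \eqref{eq:E2mod} at the fixed point \(\tau=i\): it gives \(E_2(i)=-E_2(i)+6/\pi\). Likewise, \eqref{eq:E6i} comes from \eqref{eq:E6mod} at \(\tau=i\), since \(E_6(i)=i^6E_6(i)=-E_6(i)\). The relations \eqref{eq:E4half} and \eqref{eq:E6half} are \eqref{eq:E4mod} and \eqref{eq:E6mod} at \(\tau=2i\), because \(-1/(2i)=i/2\), \((2i)^4=16\) and \((2i)^6=-64\). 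In the same way, \eqref{eq:E2mod} at \(\tau=2i\) gives \(E_2(i/2)=-4E_2(2i)+12/\pi\). So \eqref{eq:E2half} will follow once \eqref{eq:E22i} is known.

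For \eqref{eq:E22i}, I will evaluate \eqref{eq:E2theta} at \(\tau=i\) to get \(2E_2(2i)-E_2(i)=a^4+b^4=A+B\). Then \(E_2(2i)=\tfrac{A+B}{2}+\tfrac{3}{2\pi}\). Substituting this above gives \(E_2(i/2)=-2(A+B)-6/\pi+12/\pi\), which is exactly \eqref{eq:E2half}.

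The remaining values \eqref{eq:E42i}, \eqref{eq:E4i} and \eqref{eq:E62i} all come from \eqref{eq:E4theta} and \eqref{eq:E6theta}. Set \(t_j=\vartheta_j(\tau)^4\), so that \(t_3=t_2+t_4\) by \eqref{eq:Jacobi}.

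For \eqref{eq:E4i}, I evaluate at \(\tau=i\). The duplication formulas give \(\vartheta_2(i)^2=2ab\), \(\vartheta_3(i)^2=a^2+b^2\) and \(\vartheta_4(i)^2=a^2-b^2\). Hence \(\vartheta_2(i)^8=16AB\), and \(\vartheta_3(i)^8+\vartheta_4(i)^8=2(A^2+6AB+B^2)\) after expanding. Therefore \(E_4(i)=A^2+14AB+B^2\). Using Lemma~\ref{lem:ABrelation}, \(A^2+B^2=34AB\), so \(E_4(i)=48AB\).

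For \eqref{eq:E42i} and \eqref{eq:E62i}, I evaluate at \(\tau=2i\), where \(t_3=A\), \(t_2=B\) and \(t_4=A-B\). Then
\[
E_4(2i)=\tfrac12\bigl(B^2+A^2+(A-B)^2\bigr)=A^2-AB+B^2,
\]
and
\[
E_6(2i)=\tfrac12(A+B)\bigl(A+(A-B)\bigr)\bigl((A-B)-B\bigr)=\tfrac12(A+B)(2A-B)(A-2B).
\]

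I expect no real obstacle, only bookkeeping. The one delicate point is \eqref{eq:E4i}: it is the only place where Lemma~\ref{lem:ABrelation} is needed, and the coefficient 48 depends on expanding \((a^2\pm b^2)^4\) correctly. As a consistency check, I will verify \(16E_4(2i)=11E_4(i)\) from Corollary~\ref{cor:Eisenstein-level-four}. Indeed, \(16(A^2-AB+B^2)=16\cdot 33AB=528AB=11\cdot 48AB\), which confirms the computation.
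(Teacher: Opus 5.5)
Your proof is correct and follows the paper's argument item by item. The one difference is in \eqref{eq:E4i}. The paper uses \(\vartheta_2(i)=\vartheta_4(i)\) and \eqref{eq:Jacobi} to get \(\vartheta_3(i)^4=2\vartheta_2(i)^4\), so \(E_4(i)=3\vartheta_2(i)^8=48AB\) follows from \eqref{eq:dup1} alone, without Lemma~\ref{lem:ABrelation}. You instead expand all three duplication formulas to get \(A^2+14AB+B^2\) and then apply \(A^2+B^2=34AB\); this is equally valid, since that relation rests on the same fact \(\vartheta_2(i)=\vartheta_4(i)\).
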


\begin{proof}
Applying \eqref{eq:E2mod} with $\tau=i$ gives
\[
E_2(i)=i^2E_2(i)+\frac{6i}{\pi i}=-E_2(i)+\frac{6}{\pi},
\]
so \eqref{eq:E2i} follows.

Next, \eqref{eq:E2theta} with $\tau=i$ gives
\[
2E_2(2i)-E_2(i)=\vartheta_3(2i)^4+\vartheta_2(2i)^4=A+B.
\]
Using \eqref{eq:E2i} we obtain \eqref{eq:E22i}. Then \eqref{eq:E2mod} with $\tau=2i$ gives
\[
E_2(i/2)=E_2\!\left(-\frac{1}{2i}\right)=(2i)^2E_2(2i)+\frac{6(2i)}{\pi i}=-4E_2(2i)+\frac{12}{\pi},
\]
which, combined with \eqref{eq:E22i}, yields \eqref{eq:E2half}.

For \eqref{eq:E42i}, use \eqref{eq:E4theta} at $\tau=2i$ and Jacobi's identity \eqref{eq:Jacobi}:
\[
\vartheta_4(2i)^4=\vartheta_3(2i)^4-\vartheta_2(2i)^4=A-B.
\]
Hence
\[
E_4(2i)=\frac12\bigl(B^2+A^2+(A-B)^2\bigr)=A^2-AB+B^2.
\]

To compute $E_4(i)$, note from \eqref{eq:t2=t4} and \eqref{eq:Jacobi} that
\[
\vartheta_3(i)^4=\vartheta_2(i)^4+\vartheta_4(i)^4=2\vartheta_2(i)^4.
\]
Therefore, by \eqref{eq:E4theta},
\[
E_4(i)=\frac12\bigl(\vartheta_2(i)^8+\vartheta_3(i)^8+\vartheta_4(i)^8\bigr)
=\frac12\bigl(x^2+(2x)^2+x^2\bigr)=3x^2,
\]
where $x:=\vartheta_2(i)^4$. From \eqref{eq:dup1},
\[
x=\vartheta_2(i)^4=(2ab)^2=4a^2b^2,
\]
so
\[
x^2=16a^4b^4=16AB.
\]
Thus \eqref{eq:E4i} follows:
\[
E_4(i)=3\cdot 16AB=48AB.
\]

Equation \eqref{eq:E4half} is immediate from \eqref{eq:E4mod} with $\tau=2i$, because $i/2=-1/(2i)$:
\[
E_4(i/2)=(2i)^4E_4(2i)=16E_4(2i).
\]

For \eqref{eq:E6i}, apply \eqref{eq:E6mod} with $\tau=i$:
\[
E_6(i)=i^6E_6(i)=-E_6(i),
\]
so $E_6(i)=0$.

For \eqref{eq:E62i}, use \eqref{eq:E6theta} at $\tau=2i$. Since
\[
\vartheta_2(2i)^4=B,\qquad \vartheta_3(2i)^4=A,\qquad \vartheta_4(2i)^4=A-B,
\]
we get
\[
E_6(2i)=\frac12(A+B)(A+A-B)(A-B-B)=\frac12(A+B)(2A-B)(A-2B).
\]
Finally, \eqref{eq:E6mod} with $\tau=2i$ gives
\[
E_6(i/2)=E_6\!\left(-\frac1{2i}\right)=(2i)^6E_6(2i)=-64E_6(2i),
\]
which is \eqref{eq:E6half}.
\end{proof}

\section{Initial cases, the quadratic identity, and an alternative proof}\label{sec:ini}

Recall
\[
S_m:=\sum_{n=1}^\infty\left(\frac{n^{2m}}{\cosh(n\pi)-1}-\frac{c_m n^{2m}}{\cosh(2n\pi)-1}+\frac{2^{2m+2}n^{2m}}{\cosh(4n\pi)-1}\right),
\]
where
\[
c_m=2^{2m+1}-(-1)^{m(m+1)/2}2^{m+1}+4.
\]
Thus
\[
c_0=4,\qquad c_1=16,\qquad c_2=44,\qquad c_3=116.
\]
The first four explicit instances are
\begin{align*}
S_0&=\sum_{n=1}^\infty\left(\frac1{\cosh(n\pi)-1}-\frac4{\cosh(2n\pi)-1}+\frac4{\cosh(4n\pi)-1}\right),\\
S_1&=\sum_{n=1}^\infty\left(\frac{n^2}{\cosh(n\pi)-1}-\frac{16n^2}{\cosh(2n\pi)-1}+\frac{16n^2}{\cosh(4n\pi)-1}\right),\\
S_2&=\sum_{n=1}^\infty\left(\frac{n^4}{\cosh(n\pi)-1}-\frac{44n^4}{\cosh(2n\pi)-1}+\frac{64n^4}{\cosh(4n\pi)-1}\right),\\
S_3&=\sum_{n=1}^\infty\left(\frac{n^6}{\cosh(n\pi)-1}-\frac{116n^6}{\cosh(2n\pi)-1}+\frac{256n^6}{\cosh(4n\pi)-1}\right).
\end{align*}

We begin by proving the cases \(m=0,1,2,3\), which already exhibit the modular mechanism underlying the general theorem.

\begin{theorem}
The sums $S_0,S_1,S_2,S_3$ satisfy
\[
S_0=\frac1{12},\qquad S_1=\frac1{2\pi^2},\qquad S_2=0,\qquad S_3=0.
\]
\end{theorem}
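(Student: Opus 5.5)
The plan is to reduce everything to the special values collected in Lemma~\ref{lem:specialvalues}, expressed through $A=\vartheta_3(2i)^4$ and $B=\vartheta_2(2i)^4$, and then use the single algebraic relation $A^2-34AB+B^2=0$ of Lemma~\ref{lem:ABrelation}. By Lemma~\ref{lem:Lm}, $S_m=L_m(1)-c_mL_m(2)+2^{2m+2}L_m(4)$ with $L_m(a)$ evaluated at $\tau=i/2,\,i,\,2i$ respectively. So each $S_m$ becomes a polynomial expression in $E_2,E_4,E_6$ at these three points.

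For $m=0$, use \eqref{eq:L0}. One gets $12S_0=1-E_2(i/2)+4E_2(i)-4E_2(2i)$. Substituting \eqref{eq:E2i}, \eqref{eq:E22i} and \eqref{eq:E2half}, the $A+B$ terms cancel ($2(A+B)-2(A+B)$), and the $\pi$-terms give $-6/\pi+12/\pi-6/\pi=0$, leaving $S_0=1/12$.

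For $m=1,2,3$, use \eqref{eq:L1}--\eqref{eq:L3}. The values at $i/2$ still need to be expressed.
\begin{itemize}
\item $E_2(i/2)$ is given by \eqref{eq:E2half}.
\item $E_4(i/2)=16E_4(2i)$ and $E_6(i/2)=-64E_6(2i)$.
\item $E_4(i)=48AB$ and $E_6(i)=0$.
\item $E_4(2i)$ and $E_6(2i)$ are the explicit polynomials in $A,B$ from Lemma~\ref{lem:specialvalues}.
\end{itemize}
After substitution, each $S_m$ is a polynomial in $A,B$ and $1/\pi$:
\begin{itemize}
\item $144S_1$ is quadratic in $(A,B,1/\pi)$;
\item $360S_2$ is cubic;
\item $504S_3$ is quartic.
\end{itemize}

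I will organize the computation by powers of $1/\pi$. The pure $1/\pi^2$ term should produce $72/\pi^2$ in $144S_1$, since $E_2(i)^2=9/\pi^2$ and $E_2(2i)$ and $E_2(i/2)$ contribute $(3/2\pi)^2$ and $(6/\pi)^2$ with weights $1,-16,16$. The $(A+B)/\pi$ cross terms should cancel by the same bookkeeping as in $m=0$. For $S_2,S_3$, the $1/\pi$ terms come only from $E_2$ multiplied by $E_4$ or $E_6$. Their coefficients should vanish using $E_6(i)=0$ together with $16E_4(2i)=11E_4(i)$. That last identity is itself $16(A^2-AB+B^2)=528AB$, which is exactly Lemma~\ref{lem:ABrelation}.

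The remaining $\pi$-free homogeneous polynomials in $A,B$ of degrees $2,3,4$ must then be shown to be multiples of $A^2-34AB+B^2$, which can be checked by polynomial division. The factor structure of $E_6(2i)$ and the signs of $E_2(i/2)$ help here.

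The main obstacle is the bookkeeping in the degree-4 case $m=3$. The expression $E_4(\tau)^2-E_2(\tau)E_6(\tau)$ at three points with weights $1,-116,256$ generates many terms. I would first reduce modulo $B^2\equiv 34AB-A^2$, bringing every monomial to the form $A^{k}B^{\le1}$, and verify that both the resulting coefficients vanish.
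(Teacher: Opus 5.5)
Your proposal is correct and follows essentially the same route as the paper. You reduce $S_m$ via the $L_m$ formulas to values of $E_2,E_4,E_6$ at $i/2,i,2i$, substitute the special values in terms of $A,B,1/\pi$, and annihilate what remains with $A^2-34AB+B^2=0$; the paper only groups terms more economically, using $E_2(i/2)+4E_2(2i)=\frac{12}{\pi}$ to get $360S_2=\frac{12}{\pi}\bigl(16E_4(2i)-11E_4(i)\bigr)$, and $E_2(i/2)-4E_2(2i)=-4(A+B)$ to get $504S_3=128(A^2-34AB+B^2)(2A^2+61AB+2B^2)$ directly.
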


\begin{proof}
We treat the four cases separately.

\medskip
\noindent\textbf{Case $m=0$.}
By \eqref{eq:L0},
\[
S_0=L_0(1)-4L_0(2)+4L_0(4)
=\frac{1-E_2(i/2)+4E_2(i)-4E_2(2i)}{12}.
\]
Substituting \eqref{eq:E2i} and \eqref{eq:E2half}, we get
\[
\begin{aligned}
12S_0
&=1-\left(-4E_2(2i)+\frac{12}{\pi}\right)+4\cdot \frac{3}{\pi}-4E_2(2i)\\
&=1.
\end{aligned}
\]
Hence
\[
S_0=\frac1{12}.
\]

\medskip
\noindent\textbf{Case $m=1$.}
By \eqref{eq:L1},
\[
144S_1=\bigl(E_4-E_2^2\bigr)(i/2)-16\bigl(E_4-E_2^2\bigr)(i)+16\bigl(E_4-E_2^2\bigr)(2i).
\]
Equivalently,
\[
144S_1=E_4(i/2)-16E_4(i)+16E_4(2i)-E_2(i/2)^2+16E_2(i)^2-16E_2(2i)^2.
\]
Using \eqref{eq:E4half}, this becomes
\[
144S_1=32E_4(2i)-16E_4(i)-E_2(i/2)^2+16E_2(i)^2-16E_2(2i)^2.
\]
Now substitute \eqref{eq:E2i}, \eqref{eq:E22i}, \eqref{eq:E2half}, \eqref{eq:E42i}, and \eqref{eq:E4i}:
\[
E_2(i)=\frac{3}{\pi},
\qquad
E_2(2i)=\frac{A+B}{2}+\frac{3}{2\pi},
\qquad
E_2(i/2)=-2(A+B)+\frac{6}{\pi},
\]
\[
E_4(2i)=A^2-AB+B^2,
\qquad
E_4(i)=48AB.
\]
A direct expansion gives
\[
144S_1=24A^2-816AB+24B^2+\frac{72}{\pi^2}.
\]
Factor out $24$ and use \eqref{eq:ABrel}:
\[
144S_1=24\bigl(A^2-34AB+B^2\bigr)+\frac{72}{\pi^2}=\frac{72}{\pi^2}.
\]
Therefore
\[
S_1=\frac1{2\pi^2}.
\]

\medskip
\noindent\textbf{Case $m=2$.}
By \eqref{eq:L2},
\[
360S_2=\bigl(E_2E_4-E_6\bigr)(i/2)-44\bigl(E_2E_4-E_6\bigr)(i)+64\bigl(E_2E_4-E_6\bigr)(2i).
\]
Since $E_6(i)=0$, $E_4(i/2)=16E_4(2i)$, and $E_6(i/2)=-64E_6(2i)$, the $E_6$-terms cancel and we obtain
\[
\begin{aligned}
360S_2
&=E_2(i/2)E_4(i/2)-44E_2(i)E_4(i)+64E_2(2i)E_4(2i)\\
&=16E_4(2i)\bigl(E_2(i/2)+4E_2(2i)\bigr)-44E_2(i)E_4(i).
\end{aligned}
\]
By \eqref{eq:E2i}, \eqref{eq:E2half}, and \eqref{eq:E22i},
\[
E_2(i/2)+4E_2(2i)=\frac{12}{\pi}.
\]
Hence
\[
360S_2=\frac{12}{\pi}\bigl(16E_4(2i)-11E_4(i)\bigr).
\]
Using \eqref{eq:E42i} and \eqref{eq:E4i},
\[
16E_4(2i)-11E_4(i)=16(A^2-AB+B^2)-11\cdot 48AB=16(A^2-34AB+B^2)=0
\]
by \eqref{eq:ABrel}. Therefore
\[
S_2=0.
\]

\medskip
\noindent\textbf{Case $m=3$.}
By \eqref{eq:L3},
\[
504S_3=\bigl(E_4^2-E_2E_6\bigr)(i/2)-116\bigl(E_4^2-E_2E_6\bigr)(i)+256\bigl(E_4^2-E_2E_6\bigr)(2i).
\]
Using $E_6(i)=0$, $E_4(i/2)=16E_4(2i)$, and $E_6(i/2)=-64E_6(2i)$, we get
\[
\begin{aligned}
504S_3
&=256E_4(2i)^2-116E_4(i)^2+64E_2(i/2)E_6(2i)+256E_4(2i)^2-256E_2(2i)E_6(2i)\\
&=512E_4(2i)^2-116E_4(i)^2+64\bigl(E_2(i/2)-4E_2(2i)\bigr)E_6(2i).
\end{aligned}
\]
By \eqref{eq:E2half} and \eqref{eq:E22i},
\[
E_2(i/2)-4E_2(2i)=-4(A+B).
\]
Substituting \eqref{eq:E42i}, \eqref{eq:E4i}, and \eqref{eq:E62i}, we obtain
\[
\begin{aligned}
504S_3
&=512(A^2-AB+B^2)^2-116(48AB)^2\\
&\qquad\relax -128(A+B)^2(2A-B)(A-2B).
\end{aligned}
\]
A straightforward expansion and factorization yields
\[
504S_3=128\bigl(A^2-34AB+B^2\bigr)\bigl(2A^2+61AB+2B^2\bigr).
\]
By \eqref{eq:ABrel}, the first factor vanishes. Hence
\[
S_3=0.
\]
This completes the proof.
\end{proof}
The same mechanism explains why these identities are naturally related to modular and quasimodular forms of level $2$: after rewriting
\[
\frac{1}{\cosh x-1}=2\sum_{r=1}^\infty r e^{-rx},
\]
the sums become linear combinations of values of $DE_{2m}$ at the CM points $i/2$, $i$, and $2i$. The cases $m=0,1,2,3$ collapse because of the single algebraic relation \eqref{eq:ABrel} among the theta-constants at $2i$.

\begin{proof}[Proof of Theorem~\ref{thm:squared}]
For $a>0$, define
\[
T(a):=\sum_{n=1}^\infty \frac{1}{(\cosh(a\pi n)-1)^2}.
\]
Then the required identity is
\[
4T(1)-55T(2)+16T(4)=\frac{77-234/\pi}{72}.
\]

We first express $T(a)$ in terms of Eisenstein series. Since
\[
\cosh x-1=\frac{(1-e^{-x})^2}{2e^{-x}},
\]
we have
\[
\frac{1}{(\cosh x-1)^2}
=
\frac{4e^{-2x}}{(1-e^{-x})^4}.
\]
Now
\[
\frac{1}{(1-y)^4}=\sum_{r=0}^\infty \binom{r+3}{3}y^r,
\]
so with $y=e^{-x}$,
\[
\frac{4e^{-2x}}{(1-e^{-x})^4}
=
4\sum_{r=0}^\infty \binom{r+3}{3}e^{-(r+2)x}.
\]
Writing $m=r+2$, we obtain
\[
\frac{1}{(\cosh x-1)^2}
=
4\sum_{m=2}^\infty \binom{m+1}{3}e^{-mx}
=
\frac23\sum_{m=1}^\infty (m^3-m)e^{-mx}.
\]
Therefore, if
\[
q=e^{-a\pi}=e^{2\pi i\tau},
\qquad \tau=\frac{ia}{2},
\]
then
\[
T(a)
=
\frac23\sum_{n=1}^\infty\sum_{m=1}^\infty (m^3-m)q^{mn}.
\]
Collecting the coefficient of $q^N$ gives
\[
\sum_{m\mid N}(m^3-m)=\sigma_3(N)-\sigma_1(N),
\]
hence
\[
T(a)=\frac23\sum_{N=1}^\infty \bigl(\sigma_3(N)-\sigma_1(N)\bigr)q^N.
\]
Using
\[
E_2(\tau)=1-24\sum_{N=1}^\infty \sigma_1(N)q^N,
\qquad
E_4(\tau)=1+240\sum_{N=1}^\infty \sigma_3(N)q^N,
\]
we get
\[
\sum_{N=1}^\infty \sigma_1(N)q^N=\frac{1-E_2(\tau)}{24},
\qquad
\sum_{N=1}^\infty \sigma_3(N)q^N=\frac{E_4(\tau)-1}{240}.
\]
Therefore
\[
T(a)
=
\frac23\left(\frac{E_4(\tau)-1}{240}-\frac{1-E_2(\tau)}{24}\right)
=
\frac{E_4(\tau)+10E_2(\tau)-11}{360}.
\]

Now define
\[
C:=4T(1)-55T(2)+16T(4).
\]
Since $a=1,2,4$ correspond to $\tau=i/2,i,2i$, we obtain
\[
360C
=
4\bigl(E_4(i/2)+10E_2(i/2)-11\bigr)
-55\bigl(E_4(i)+10E_2(i)-11\bigr)
+16\bigl(E_4(2i)+10E_2(2i)-11\bigr).
\]
Expanding,
\[
360C
=
\bigl(4E_4(i/2)-55E_4(i)+16E_4(2i)\bigr)
+
\bigl(40E_2(i/2)-550E_2(i)+160E_2(2i)\bigr)
+385.
\]

We now simplify the $E_2$-part. By \eqref{eq:E2i}, \eqref{eq:E22i}, and \eqref{eq:E2half},
\[
E_2(i)=\frac{3}{\pi},
\qquad
E_2(i/2)+4E_2(2i)=\frac{12}{\pi}.
\]
Hence
\[
40E_2(i/2)-550E_2(i)+160E_2(2i)
=
40\bigl(E_2(i/2)+4E_2(2i)\bigr)-550E_2(i)
=
40\cdot \frac{12}{\pi}-550\cdot \frac{3}{\pi}
=
-\frac{1170}{\pi}.
\]

Next we simplify the $E_4$-part. By \eqref{eq:E4half},
\[
E_4(i/2)=16E_4(2i),
\]
so
\[
4E_4(i/2)-55E_4(i)+16E_4(2i)
=
80E_4(2i)-55E_4(i)
=
5\bigl(16E_4(2i)-11E_4(i)\bigr).
\]
Using \eqref{eq:E42i}, \eqref{eq:E4i}, and \eqref{eq:ABrel}, we obtain
\[
16E_4(2i)-11E_4(i)
=
16(A^2-AB+B^2)-11\cdot 48AB
=
16(A^2-34AB+B^2)
=
0.
\]
Therefore the entire $E_4$-part vanishes.

Substituting back, we find
\[
360C=385-\frac{1170}{\pi}.
\]
Thus
\[
C=\frac{385-1170/\pi}{360}
=\frac{77-234/\pi}{72}.
\]
Since $C=4T(1)-55T(2)+16T(4)$, this is exactly
\[
\sum_{n=1}^\infty \left(
\frac{4}{(\cosh(n\pi)-1)^2}
-\frac{55}{(\cosh(2n\pi)-1)^2}
+\frac{16}{(\cosh(4n\pi)-1)^2}
\right)
=
\frac{77-234/\pi}{72}.
\]
This completes the proof.
\end{proof}

\subsection{An alternative proof of the cases \(m=0,1,2,3\)}\label{sec:alt}
Using
\[
\cosh(2u)-1=2\sinh^2(u),
\]
the identity
\begin{align*}
S_0&=\sum_{n=1}^{\infty}\left(
\frac{1}{\cosh(n\pi)-1}
-\frac{4}{\cosh(2n\pi)-1}
+\frac{4}{\cosh(4n\pi)-1}
\right)=\frac{1}{12},\\
S_1&=\sum_{n=1}^{\infty}\left(
\frac{n^2}{\cosh(n\pi)-1}
-\frac{16n^2}{\cosh(2n\pi)-1}
+\frac{16n^2}{\cosh(4n\pi)-1}
\right)=\frac{1}{2\pi^2},\\
S_2&=\sum_{n=1}^\infty\left(\frac{n^4}{\cosh(n\pi)-1}-\frac{44n^4}{\cosh(2n\pi)-1}+\frac{64n^4}{\cosh(4n\pi)-1}\right)=0,\\
S_3&=\sum_{n=1}^\infty\left(\frac{n^6}{\cosh(n\pi)-1}-\frac{116n^6}{\cosh(2n\pi)-1}+\frac{256n^6}{\cosh(4n\pi)-1}\right)=0
\end{align*}
are equivalent to
\begin{align*}
&\frac1{2}S_{0,2}\!\left(\frac{\pi}{2}\right)-2S_{0,2}(\pi)+2S_{0,2}(2\pi)=\frac1{12},\\
&\frac12\,S_{2,2}\!\left(\frac{\pi}{2}\right)-8S_{2,2}(\pi)+8S_{2,2}(2\pi)=\frac{1}{2\pi^2},\\
&\frac1{2}S_{4,2}\!\left(\frac{\pi}{2}\right)-22S_{4,2}(\pi)+32S_{4,2}(2\pi)=0,\\
&\frac1{2}S_{6,2}\!\left(\frac{\pi}{2}\right)-58S_{6,2}(\pi)+128S_{6,2}(2\pi)=0,
\end{align*}
where
\[
q=e^{-y},
\qquad
S_{2p,2}(y):=4\sum_{n=1}^{\infty}\frac{n^{2p}q^{2n}}{(1-q^{2n})^2}
=\sum_{n=1}^{\infty}\frac{n^{2p}}{\sinh^2(ny)}.
\]

First, we prove that the first formula for \( S_0 \) holds. From \cite[eq. (101)]{XuZhao-2022}, we have
\begin{align*}
aS_{0,2}(a)+bS_{0,2}(b)-\frac1{6}(a+b)+1=0\quad (ab=\pi^2).
\end{align*}
Let \((a, b) = (\pi/2, 2\pi)\) and \((\pi, \pi)\). Then we have
\begin{align*}
\frac1{2}S_{0,2}\!\left(\frac{\pi}{2}\right)+2S_{0,2}(2\pi)=\frac5{12}-\frac1{\pi}
\end{align*}
and
\begin{align*}
S_{0,2}(\pi)=\frac1{6}-\frac1{2\pi}.
\end{align*}
Hence, one obtains 
\begin{align*}
\frac1{2}S_{0,2}\!\left(\frac{\pi}{2}\right)-2S_{0,2}(\pi)+2S_{0,2}(2\pi)=\frac1{12}.
\end{align*}

Now introduce Ramanujan's parameters (see \cite{Berndt2016})
\[
z=z(x):=\Hyp\!\left(\frac12,\frac12;1;x\right),
\qquad
y=y(x):=\pi\,
\frac{\Hyp\!\left(\frac12,\frac12;1;1-x\right)}
     {\Hyp\!\left(\frac12,\frac12;1;x\right)},
\qquad
z'=\frac{dz}{dx}.
\]
Then from \cite{XuZhao-2022}, we have 
\begin{align*}
&S_{2,2}\!\left(\frac{y}{2}\right)=\frac{x(1-x)z^2}{3}\Bigl(z^2-(1-5x)zz'-6x(1-x)(z')^2\Bigr),\\
&S_{4,2}\!\left(\frac{y}{2}\right)=\frac{x(1-x)z^5}{30}\Bigl(2(1+14x+x^2)z'+(7+x)z\Bigr),\\
&S_{6,2}\!\left(\frac{y}{2}\right)=\frac{x(1-x)z^7}{42}\Bigl(-2(1-33x-33x^2+x^3)z'+(11+22x-x^2)z\Bigr).
\end{align*}

To obtain the formulas for \(S_{m,2}(y)\) \ and\ \(S_{m,2}(2y)\)\ $(m=2,4,6)$, apply the quadratic transformation from \cite[Theorem 1]{XuZhao-2022}:
if
\[
r=\sqrt{1-x},
\qquad
x_1=\left(\frac{1-r}{1+r}\right)^2,
\qquad
z_1=\frac12 z(1+r),
\]
then
\[
z_1'=
\frac{(1+r)^3}{4(1-r)}
\left(
-\frac z2+(r+1-x)z'
\right),
\]
and \(y(x_1)=2y(x)\). Therefore

\begin{align*}
&S_{2,2}(y)=\frac{x(1-x)z^2}{24}\Bigl(z^2-4(1-2x)zz'-12x(1-x)(z')^2\Bigr),\\
&S_{2,2}(2y)=\frac{x(1-x)z^2}{48}\Bigl(z^2+(5x-4)zz'-6x(1-x)(z')^2\Bigr),\\
&S_{4,2}(y)=\frac{(1-x)xz^5}{120}\Bigl(4(1-x+x^2)z'+(2x-1)z\Bigr),\\
&S_{4,2}(2y)=\frac{(1-x)xz^5}{1920}\Bigl(2(16-16x+x^2)z'+(x-8)z\Bigr),\\
&S_{6,2}(y)=\frac{xz^7}{168}\Bigl((1+x-4x^2+2x^3)z+2(-2+5x-5x^3+2x^4)z'\Bigr),\\
&S_{6,2}(2y)=\frac{(x-1)xz^7}{10752}\Bigl((-32+20x+x^2)z+2(64-96x+30x^2+x^3)z'\Bigr).
\end{align*}
For explicit evaluations of certain hyperbolic series with denominators raised to the first power, the reader is referred to \cite{B1991} and \cite{Rama1916}.

Now set \(x=\frac12\). Then \(y=\pi\), and
\[
z\!\left(\frac12\right)=\frac{\Ga\!\left(\frac14\right)^2}{2\pi^{3/2}},
\qquad
z'\!\left(\frac12\right)=\frac{4\sqrt{\pi}}{\Ga\!\left(\frac14\right)^2}.
\]
Substituting into the three formulas above yields
\begin{align*}
&S_{2,2}\!\left(\frac{\pi}{2}\right)=-\frac{1}{2\pi^2}+\frac{\Ga\!\left(\frac14\right)^4}{16\pi^4}+\frac{\Ga\!\left(\frac14\right)^8}{192\pi^6},\\
&S_{2,2}(\pi)=-\frac{1}{8\pi^2}+\frac{\Ga\!\left(\frac14\right)^8}{1536\pi^6},\\
&S_{2,2}(2\pi)=-\frac{1}{32\pi^2}-\frac{\Ga\!\left(\frac14\right)^4}{256\pi^4}+\frac{\Ga\!\left(\frac14\right)^8}
{3072\pi^6},\\
&S_{4,2}\!\left(\frac{\pi}{2}\right)=\frac{11\Ga\!\left(\frac14\right)^8}{640\pi^7}+\frac{\Ga\!\left(\frac14\right)^{12}}{1024\pi^9},\\
&S_{4,2}(\pi)=\frac{\Ga\!\left(\frac14\right)^8}{1280\pi^7},\\
&S_{4,2}(2\pi)=\frac{11\Ga\!\left(\frac14\right)^8}{40960\pi^7}-\frac{\Ga\!\left(\frac14\right)^{12}}{65536\pi^9},\\
&S_{6,2}\!\left(\frac{\pi}{2}\right)=\frac{9\Ga\!\left(\frac14\right)^{12}}{1024\pi^{10}}+\frac{29\Ga\!\left(\frac14\right)^{16}}{57344\pi^{12}},\\
&S_{6,2}(\pi)=\frac{\Ga\!\left(\frac14\right)^{16}}{114688\pi^{12}},\\
&S_{6,2}(2\pi)=-\frac{9\Ga\!\left(\frac14\right)^{12}}{262144\pi^{10}}+\frac{29\Ga\!\left(\frac14\right)^{16}}{14680064\pi^{12}}.
\end{align*}

Hence
\begin{align*}
\frac12\,S_{2,2}\!\left(\frac{\pi}{2}\right)-8S_{2,2}(\pi)+8S_{2,2}(2\pi)
&=
\frac{\Ga\!\left(\frac14\right)^8+12\Ga\!\left(\frac14\right)^4\pi^2-96\pi^4}{384\pi^6}
-\frac{\Ga\!\left(\frac14\right)^8-192\pi^4}{192\pi^6} \\
&\qquad
+\frac{\Ga\!\left(\frac14\right)^8-12\Ga\!\left(\frac14\right)^4\pi^2-96\pi^4}{384\pi^6} \\
&=
\frac{\Ga\!\left(\frac14\right)^8-96\pi^4}{192\pi^6}
-\frac{\Ga\!\left(\frac14\right)^8-192\pi^4}{192\pi^6} \\
&=
\frac{96\pi^4}{192\pi^6}
=
\frac{1}{2\pi^2},
\end{align*}
and 
\begin{align*}
&\frac1{2}S_{4,2}\!\left(\frac{\pi}{2}\right)-22S_{4,2}(\pi)+32S_{4,2}(2\pi)=0,\\
&\frac1{2}S_{6,2}\!\left(\frac{\pi}{2}\right)-58S_{6,2}(\pi)+128S_{6,2}(2\pi)=0.
\end{align*}

Therefore
\begin{align*}
S_1&=\sum_{n=1}^{\infty}\left(
\frac{n^2}{\cosh(n\pi)-1}
-\frac{16n^2}{\cosh(2n\pi)-1}
+\frac{16n^2}{\cosh(4n\pi)-1}
\right)=\frac{1}{2\pi^2},\\
S_2&=\sum_{n=1}^\infty\left(\frac{n^4}{\cosh(n\pi)-1}-\frac{44n^4}{\cosh(2n\pi)-1}+\frac{64n^4}{\cosh(4n\pi)-1}\right)=0,\\
S_3&=\sum_{n=1}^\infty\left(\frac{n^6}{\cosh(n\pi)-1}-\frac{116n^6}{\cosh(2n\pi)-1}+\frac{256n^6}{\cosh(4n\pi)-1}\right)=0
\end{align*}

\section{Proof of Theorem~\ref{thm:main} in full generality}\label{sec:last}

We now prove Theorem~\ref{thm:main} for all \(m\ge 0\). Recall that
\[
S_m=\sum_{n=1}^\infty\left(
\frac{n^{2m}}{\cosh(n\pi)-1}
-\frac{c_m n^{2m}}{\cosh(2n\pi)-1}
+\frac{2^{2m+2}n^{2m}}{\cosh(4n\pi)-1}
\right),
\]
where
\[
c_m=2^{2m+1}-(-1)^{m(m+1)/2}2^{m+1}+4.
\]
We need to prove that for all \(m\ge 0\), one has
\[
S_0=\frac1{12},\qquad S_1=\frac1{2\pi^2},\qquad S_m=0\ \text{for all }m>1.
\]

\begin{proof}
The cases \(m=0\) and \(m=1\) were proved above, so it remains to show that \(S_m=0\) for every \(m>1\).

Let
\[
k:=2m,\qquad \varepsilon_m:=(-1)^{m(m+1)/2}.
\]
Then \(k\ge 4\). By Lemma~\ref{lem:Lm}, for \(\tau=ia/2\),
\[
L_m(a)= -\frac{B_k}{k}\,D E_k(\tau),
\qquad D=\frac{1}{2\pi i}\frac{d}{d\tau}.
\]
Hence
\[
S_m=0
\]
is equivalent to
\[
D E_k(i/2)-c_m D E_k(i)+2^{k+2}D E_k(2i)=0.
\]
Since \(D=\frac{1}{2\pi i}\frac{d}{d\tau}\), this is equivalent to
\[
E_k'(i/2)-c_m E_k'(i)+2^{k+2}E_k'(2i)=0.
\]
It is convenient to replace \(E_k\) by the classical Eisenstein series
\[
G_k(\tau):=2\zeta(k)E_k(\tau)
=\sum_{(u,v)\in\ZZ^2\setminus\{(0,0)\}}(u+v\tau)^{-k}.
\]
Since \(2\zeta(k)\neq 0\), it is enough to prove
\begin{equation}
G_k'(i/2)-c_m G_k'(i)+2^{k+2}G_k'(2i)=0.
\label{eq:target-general}
\end{equation}

For \(k\ge 4\), the defining series for \(G_k\) and \(G_k'\) converge absolutely and locally uniformly on \(\mathfrak H\), so all rearrangements and termwise differentiations below are justified.

\medskip
\noindent\textbf{Step 1: parity class sums.}
For \(\alpha,\beta\in\{0,1\}\), define
\[
\Sigma_{\alpha,\beta}(\tau):=
\sum_{r,s\in\ZZ}^{*_{\alpha,\beta}}
\bigl(2r+\alpha +(2s+\beta)\tau\bigr)^{-k},
\]
where the only omitted term is \(r=s=0\) when \((\alpha,\beta)=(0,0)\).
Then
\[
G_k(\tau)=\Sigma_{00}(\tau)+\Sigma_{10}(\tau)+\Sigma_{01}(\tau)+\Sigma_{11}(\tau),
\]
because every lattice point \((u,v)\in\ZZ^2\) falls into exactly one parity class modulo \(2\). Moreover,
\[
G_k(2\tau)=\Sigma_{00}(\tau)+\Sigma_{10}(\tau),
\]
because \(G_k(2\tau)\) is the subsum over lattice points whose coefficient of \(\tau\) is even, that is, over the classes with \(\beta=0\).

We abbreviate
\[
\Sigma_{\alpha,\beta}:=\Sigma_{\alpha,\beta}(i),
\qquad
\Sigma_{\alpha,\beta}':=\frac{d}{d\tau}\Sigma_{\alpha,\beta}(\tau)\Big|_{\tau=i}.
\]

The class \((0,0)\) is just a dilation, so
\begin{equation}
\Sigma_{00}(\tau)=2^{-k}G_k(\tau),
\qquad
\Sigma_{00}'=2^{-k}G_k'(i).
\label{eq:s00}
\end{equation}
The classes with equal parity satisfy
\[
\Sigma_{00}+\Sigma_{11}
=
\sum_{\substack{u,v\in\ZZ\\u\equiv v\!\!\!\pmod2}}^{\!\!\!\!\!\!\!\!\prime}
(u+vi)^{-k}.
\]
The change of variables
\[
u=a-b,\qquad v=a+b
\]
is a bijection from \(\ZZ^2\) onto the set of pairs \((u,v)\in\ZZ^2\) with \(u\equiv v\pmod 2\). Under this change of variables,
\[
u+vi=(a-b)+(a+b)i=(1+i)(a+bi),
\]
hence
\begin{equation}
\Sigma_{00}+\Sigma_{11}=(1+i)^{-k}G_k(i).
\label{eq:same-parity}
\end{equation}

Also, multiplication by \(i\) sends
\[
2r+1+2si \longmapsto i(2r+1+2si)=-2s+(2r+1)i,
\]
which is exactly the class \((0,1)\). Therefore
\begin{equation}
\Sigma_{01}=i^{-k}\Sigma_{10}=(-1)^m\Sigma_{10}.
\label{eq:s01-s10}
\end{equation}

\medskip
\noindent\textbf{Step 2: the case \(m\) even.}
Assume first that \(m\) is even. Then \(k\equiv 0\pmod4\), so \((-1)^m=1\), and \eqref{eq:s01-s10} gives
\[
\Sigma_{01}=\Sigma_{10}.
\]
Therefore
\[
G_k(i)=\Sigma_{00}+2\Sigma_{10}+\Sigma_{11}.
\]
Since \(G_k(2i)=\Sigma_{00}+\Sigma_{10}\), we obtain
\[
G_k(i)=2G_k(2i)-\Sigma_{00}+\Sigma_{11}.
\]
Using \eqref{eq:s00} and \eqref{eq:same-parity},
\[
\Sigma_{11}=(1+i)^{-k}G_k(i)-2^{-k}G_k(i).
\]
Hence
\[
G_k(i)=2G_k(2i)+\bigl((1+i)^{-k}-2^{1-k}\bigr)G_k(i).
\]
Now
\[
(1+i)^{-k}=2^{-m}(-i)^m.
\]
Since \(m\) is even, \((-i)^m=\varepsilon_m\). Thus
\[
\bigl(1-\varepsilon_m 2^{-m}+2^{1-k}\bigr)G_k(i)=2G_k(2i).
\]
Multiplying by \(2^{k+1}\), and recalling that \(k=2m\), yields
\begin{equation}
c_m\,G_k(i)=2^{k+2}G_k(2i).
\label{eq:even-value}
\end{equation}

We now turn to derivatives. The modular transformation
\[
G_k\!\left(-\frac1\tau\right)=\tau^k G_k(\tau)
\]
implies, by differentiation,
\begin{equation}
G_k'\!\left(-\frac1\tau\right)=\tau^{k+2}G_k'(\tau)+k\tau^{k+1}G_k(\tau).
\label{eq:deriv-transform}
\end{equation}
Evaluating at \(\tau=i\), and using \(i^k=1\) because \(k\equiv 0\pmod4\), we get
\[
G_k'(i)=\frac{k i}{2}G_k(i).
\]
Evaluating \eqref{eq:deriv-transform} at \(\tau=2i\), and using \(i/2=-1/(2i)\), gives
\[
G_k'(i/2)=(2i)^{k+2}G_k'(2i)+k(2i)^{k+1}G_k(2i).
\]
Since \(k\equiv 0\pmod4\),
\[
(2i)^{k+2}=-2^{k+2},
\qquad
(2i)^{k+1}=2^{k+1}i.
\]
Therefore
\[
G_k'(i/2)+2^{k+2}G_k'(2i)=k2^{k+1}i\,G_k(2i).
\]
Subtracting \(c_m G_k'(i)=\frac{k i}{2}c_m G_k(i)\), we obtain
\[
G_k'(i/2)-c_m G_k'(i)+2^{k+2}G_k'(2i)
=
k i\left(2^{k+1}G_k(2i)-\frac{c_m}{2}G_k(i)\right).
\]
By \eqref{eq:even-value}, the bracket vanishes. Thus \eqref{eq:target-general} holds when \(m\) is even.

\medskip
\noindent\textbf{Step 3: the case \(m\) odd.}
Now assume that \(m\) is odd. Then \(k\equiv 2\pmod4\). From modularity at the fixed point \(i\),
\[
G_k(i)=i^k G_k(i)=-G_k(i),
\]
hence
\begin{equation}
G_k(i)=0.
\label{eq:Gk-i-zero}
\end{equation}
By \eqref{eq:s00}, \eqref{eq:same-parity}, and \eqref{eq:s01-s10}, this implies
\[
\Sigma_{00}=0,\qquad \Sigma_{11}=0,\qquad \Sigma_{01}=-\Sigma_{10}.
\]
Since \(G_k(2i)=\Sigma_{00}+\Sigma_{10}\), we obtain
\begin{equation}
G_k(2i)=\Sigma_{10}.
\label{eq:G2i-S10}
\end{equation}

Next we compute the derivatives of the class sums.

First, differentiating the identity \(G_k(2\tau)=\Sigma_{00}(\tau)+\Sigma_{10}(\tau)\) at \(\tau=i\), we get
\begin{equation}
2G_k'(2i)=\Sigma_{00}'+\Sigma_{10}'.
\label{eq:s10prime}
\end{equation}
Using \eqref{eq:s00}, this gives
\begin{equation}
\Sigma_{10}'=2G_k'(2i)-2^{-k}G_k'(i).
\label{eq:s10prime-explicit}
\end{equation}

Second, consider
\[
\Sigma_{00}'+\Sigma_{11}'
=
-k\sum_{\substack{u,v\in\ZZ\\u\equiv v\!\!\!\pmod2}}^{\!\!\!\!\!\!\!\!\prime}
v\,(u+vi)^{-k-1}.
\]
Again write \(u=a-b\), \(v=a+b\), so \(u+vi=(1+i)(a+bi)\). Then
\[
\Sigma_{00}'+\Sigma_{11}'
=
-k(1+i)^{-k-1}
\sum_{(a,b)\ne(0,0)}
(a+b)(a+bi)^{-k-1}.
\]
Now
\[
a+b=\frac{1-i}{2}(a+bi)+\frac{1+i}{2}(a-bi),
\]
hence
\[
\Sigma_{00}'+\Sigma_{11}'
=
-\frac{k(1+i)^{-k-1}}{2}
\sum_{(a,b)\ne(0,0)}
\left((1-i)(a+bi)^{-k}+(1+i)(a-bi)(a+bi)^{-k-1}\right).
\]
Because \(G_k(i)=0\), the first sum vanishes, and therefore
\[
\Sigma_{00}'+\Sigma_{11}'
=
-\frac{k(1+i)^{-k}}{2}
\sum_{(a,b)\ne(0,0)}
(a-bi)(a+bi)^{-k-1}.
\]
Set
\[
A:=\sum_{(a,b)\ne(0,0)} a(a+bi)^{-k-1},
\qquad
B:=\sum_{(a,b)\ne(0,0)} b(a+bi)^{-k-1}.
\]
Then the last sum is \(A-iB\). Under the rotation \((a,b)\mapsto(-b,a)\), we have
\[
A
=
\sum_{(a,b)\ne(0,0)} (-b)(-b+ai)^{-k-1}
=
i^{-k-1}(-1)\sum_{(a,b)\ne(0,0)} b(a+bi)^{-k-1}.
\]
Since \(k\equiv 2\pmod4\), one has \(i^{-k-1}=i\), so
\[
A=-iB.
\]
Thus
\[
A-iB=-2iB.
\]
On the other hand,
\[
G_k'(i)=-k\sum_{(a,b)\ne(0,0)} b(a+bi)^{-k-1}=-kB,
\]
so \(B=-G_k'(i)/k\). Therefore
\[
\Sigma_{00}'+\Sigma_{11}'
=
-i(1+i)^{-k}G_k'(i).
\]
Since
\[
(1+i)^{-k}=2^{-m}(-i)^m,
\]
and \(m\) is odd, one checks that
\[
-i(1+i)^{-k}=\varepsilon_m 2^{-m}.
\]
Hence
\begin{equation}
\Sigma_{00}'+\Sigma_{11}'=\varepsilon_m 2^{-m}G_k'(i).
\label{eq:s00plus11prime}
\end{equation}
Using \eqref{eq:s00}, this yields
\begin{equation}
\Sigma_{11}'=\bigl(\varepsilon_m 2^{-m}-2^{-k}\bigr)G_k'(i).
\label{eq:s11prime}
\end{equation}

Third, we relate \(\Sigma_{01}'\) and \(\Sigma_{10}'\). Every element of the class \((0,1)\) is \(i z\) with \(z\) in the class \((1,0)\). Hence
\[
\Sigma_{01}'
=
-k\sum_{r,s\in\ZZ}(2r+1)\bigl(i(2r+1+2si)\bigr)^{-k-1}.
\]
Since \(k\equiv 2\pmod4\), we have \(i^{-k-1}=i\). Writing
\[
z:=2r+1+2si,
\qquad
2r+1=z-2si,
\]
we obtain
\[
\Sigma_{01}'
=
-k i\sum z^{-k}
-2k\sum s\,z^{-k-1}.
\]
The first sum is \(\Sigma_{10}\), and the second is exactly \(\Sigma_{10}'\). Therefore
\begin{equation}
\Sigma_{01}'=\Sigma_{10}'-k i\,\Sigma_{10}.
\label{eq:s01prime}
\end{equation}
Using \eqref{eq:G2i-S10}, this becomes
\[
\Sigma_{01}'=\Sigma_{10}'-k i\,G_k(2i).
\]

Now combine everything:
\[
G_k'(i)=\Sigma_{00}'+\Sigma_{10}'+\Sigma_{01}'+\Sigma_{11}'.
\]
Substituting \eqref{eq:s10prime-explicit}, \eqref{eq:s11prime}, and \eqref{eq:s01prime}, we find
\[
\begin{aligned}
G_k'(i)
&=
2^{-k}G_k'(i)
+\Sigma_{10}'
+\bigl(\Sigma_{10}'-k i\,G_k(2i)\bigr)
+\bigl(\varepsilon_m 2^{-m}-2^{-k}\bigr)G_k'(i)\\
&=
2\Sigma_{10}'-k i\,G_k(2i)+\varepsilon_m 2^{-m}G_k'(i).
\end{aligned}
\]
Using \eqref{eq:s10prime-explicit},
\[
G_k'(i)
=
4G_k'(2i)-2^{1-k}G_k'(i)-k i\,G_k(2i)+\varepsilon_m 2^{-m}G_k'(i).
\]
Hence
\[
\bigl(1-\varepsilon_m 2^{-m}+2^{1-k}\bigr)G_k'(i)
=
4G_k'(2i)-k i\,G_k(2i).
\]
Multiplying by \(2^{k+1}\) gives
\begin{equation}
c_m G_k'(i)=2^{k+3}G_k'(2i)-k2^{k+1}i\,G_k(2i).
\label{eq:odd-deriv-identity}
\end{equation}

Finally, apply \eqref{eq:deriv-transform} at \(\tau=2i\):
\[
G_k'(i/2)=(2i)^{k+2}G_k'(2i)+k(2i)^{k+1}G_k(2i).
\]
Since \(k\equiv 2\pmod4\),
\[
(2i)^{k+2}=2^{k+2},
\qquad
(2i)^{k+1}=-2^{k+1}i.
\]
Therefore
\[
G_k'(i/2)=2^{k+2}G_k'(2i)-k2^{k+1}i\,G_k(2i).
\]
Subtracting \(c_m G_k'(i)\) and adding \(2^{k+2}G_k'(2i)\), and then using \eqref{eq:odd-deriv-identity}, we get
\[
G_k'(i/2)-c_m G_k'(i)+2^{k+2}G_k'(2i)=0.
\]
Thus \eqref{eq:target-general} holds when \(m\) is odd as well.

We have therefore proved \eqref{eq:target-general} for every \(m>1\), hence \(S_m=0\) for all \(m>1\). Together with the already established cases \(m=0\) and \(m=1\), this proves the theorem.
\end{proof}










\end{document}